\def\<{\langle}
\def\>{\rangle}
\def\Chi{\raise .3ex
\hbox{\large $\chi$}}
\def\({\Bigl (}
\def\){\Bigr )}
\newcommand{\be}{\begin{equation}}
\newcommand{\ee}{\end{equation}}
\newcommand{\bea}{$$ \begin{array}{lll}}
\newcommand{\eea}{\end{array} $$}
\newcommand{\bi}{\begin{itemize}}
\newcommand{\ei}{\end{itemize}}
\numberwithin{equation}{section}
\newtheorem{satz}{Satz}[section]
\newtheorem{theorem}[satz]{Theorem}
\newtheorem{lemma}[satz]{Lemma}
\newtheorem{assumption}[satz]{Assumption}
\newtheorem{definition}[satz]{Definition}
\newtheorem{remark}[satz]{Remark}
\DeclareMathOperator{\E}{{\mathbb E}}
\DeclareMathOperator{\R}{{\mathbb R}}
\DeclareMathOperator{\Z}{{\mathbb Z}}
\DeclareMathOperator{\N}{{\mathbb N}}
\DeclareMathOperator{\PP}{{\mathbb P}}
\DeclareMathOperator{\QQ}{{\mathbb Q}}
\renewcommand{\phi}{\varphi}
\renewcommand{\cdot}{{\scriptstyle \bullet} }
\begin{document}
\title{Statistical inference across time scales}
\author{C\'eline Duval\footnote{GIS-CREST and CNRS-UMR 8050, 3, avenue Pierre Larousse, 92245 Malakoff Cedex, France.}\; and Marc Hoffmann\footnote{ENSAE-CREST and CNRS-UMR 8050, 3, avenue Pierre Larousse, 92245 Malakoff Cedex, France.}}

\date{}
\maketitle

\begin{abstract}
We investigate statistical inference across time scales. We take as toy model the estimation of  the intensity of a discretely observed compound Poisson process with symmetric Bernoulli jumps. We have data at times $i\Delta$ for $ i=0, 1,\ldots$ over $[0,T]$, for different sizes of $\Delta = \Delta_T$ relative to $T$ in the limit $T \rightarrow \infty$. We quantify the smooth statistical transition from a microscopic Poissonian regime ($\Delta_T \rightarrow 0)$ to a macroscopic Gaussian regime ($\Delta_T \rightarrow \infty$). The classical quadratic variation estimator is efficient in both microscopic and macroscopic scales but surprisingly shows a substantial loss of information in the intermediate scale $\Delta_T \rightarrow \Delta_\infty \in (0,\infty)$ that can be explicitly related to $\Delta_\infty$.
We discuss the implications of these findings beyond this idealised framework.

\end{abstract}

\noindent {\it Keywords:} Discretely observed random process, LAN property, Information loss.\\
\noindent {\it Mathematical Subject Classification: 62B15, 62B10, 62M99} .\\
\section{Introduction} \label{introduction}
\subsection{Motivation}
We specialise in this paper on the example of a discretely observed compound Poisson process with symmetric Bernoulli jumps. This toy model is central to several application fields, {\it e.g.} financial econometrics or traffic networks (see the discussion in Section \ref{cas NH} and the references therein). Moreover, it already contains several interesting properties that enlight a tentative concept of statistical inference across scales. Consider a $1$-dimensional random process $(X_t)$ defined by
\begin{equation} \label{first Poisson compose}
X_t = X_0+\sum_{i = 1}^{N_t} \varepsilon_i,\;\;t\geq 0,
\end{equation}
where the $\varepsilon_i \in \{-1,1\}$ are independent, identically distributed with
$$\PP(\varepsilon_i=-1)=\PP(\varepsilon_i=1)=\frac{1}{2},$$
and independent of the standard homogeneous Poisson process $(N_t)$ with intensity $\vartheta \in \Theta = (0,\infty)$. Suppose we have discrete data over $[0,T]$ at times $i\Delta$.
This means that we observe
\begin{equation} \label{def observation}
\boldsymbol{X} = \big(X_0, X_{\Delta},\ldots, X_{\lfloor T\Delta^{-1}\rfloor}\big),
\end{equation}
and we obtain a statistical experiment by taking $\PP_\vartheta$ as the law of $\boldsymbol{X}$ defined by \eqref{def observation} when $(X_t)$ is governed by \eqref{first Poisson compose}.

On the one hand, if we observe $(X_t)$ {\it microscopically}, that is if $\Delta = \Delta_T \rightarrow 0$ as $T \rightarrow \infty$, then asymptotically, we can -- essentially -- locate the jumps of $(N_t)$ that convey all the relevant information about the parameter $\vartheta$.

In that case, $\boldsymbol{X}$ is ``close" to the continuous path $(X_t, t \in [0,T])$.
On the other hand, if we observe $(X_t)$ {\it macroscopically}, that is if $\Delta_T \rightarrow \infty$ under the constraint\footnote{This condition ensures that asymptotically infinitely many observations are recorded in the limit $T \rightarrow \infty$.} $T/\Delta_T\rightarrow \infty$ , we have a completely different picture: the diffusive approximation
\begin{equation} \label{diffusive approximation}
X_{i\Delta_T} - X_{(i-1)\Delta_T} \approx \sqrt{\vartheta \Delta_T} \big(W_{i\Delta_T} -W_{(i-1)\Delta_T}\big),
\end{equation}
becomes valid, where $(W_t)$ is a standard Wiener process. Inference on $\vartheta$ essentially transfers into a Gaussian variance estimation problem;
in that case, the state space rather becomes $\R^{\lfloor T\Delta^{-1}\rfloor+1}$.
Finally if we observe $(X_t)$ in the {\it intermediate scale} $0< \liminf\Delta_T \leq \limsup \Delta_T < \infty$, we observe a process presenting too many jumps to be located accurately from the data, and too few to verify the Gaussian approximation (\ref{diffusive approximation}). Therefore, depending on the scale parameter $\Delta_T$, the state space may vary, and it has an impact on the underlying random scenarios $\PP_\vartheta$, although the interpretation of the {\it parameter of interest} $\vartheta$ remains the same at all scales. What we have is rather a family of experiments
\begin{equation} \label{model}
{\mathcal E}^{T,\Delta} = \{\PP_{\vartheta}^{T,\Delta}, \vartheta \in \Theta\},
\end{equation}
where $\PP_\vartheta^{T,\Delta}$ denotes the law of $\boldsymbol{X}$ given by \eqref{def observation}
and these experiments ${\mathcal E}^{T,\Delta} $ may exhibit different behaviours at different scales $\Delta$. Heuristically, we would like to state that in the {\it microscopic scale} $\Delta_T \rightarrow 0$, the measure $\PP_\vartheta^{T,\Delta_T}$ conveys the same information about $\vartheta$ as the law of
\begin{equation} \label{microscopic approx exp}
(N_t, t \in [0,T]),
\end{equation}
that is if the jump times of $(X_t)$ were observed.
On the other side, in the {\it macroscopic scale} $\Delta_T \rightarrow \infty$ with $T/\Delta_T\rightarrow \infty$, the measure $\PP_\vartheta^{T,\Delta_T}$ shall convey the same information about $\vartheta$ as the law of
\begin{equation} \label{macroscopic approx exp}
\big(0,\sqrt{\vartheta}W_{\Delta_T},\ldots, \sqrt{\vartheta}W_{\lfloor T\Delta_T^{-1}\rfloor}\big),
\end{equation}
that is if the data were drawn as a Brownian diffusion with variance $\vartheta$. The following questions naturally arise:
\begin{itemize}
\item[{\bf i)}] How does the model formulated in \eqref{model} interpolate -- from a statistical inference perspective -- from microscopic (when $\Delta = \Delta_T \rightarrow 0$) to macroscopic scales (when $\Delta = \Delta_T \rightarrow \infty$)? In particular, how do intrinsic statistical information indices (such as the Fisher information) evolve as $\Delta = \Delta_T$ varies?
\item[{\bf ii)}] Is there any nontrivial phenomenon that occurs in the intermediate regime
$$0 < \liminf \Delta_T \leq \limsup \Delta_T <\infty?$$
\item[{\bf iii)}] Given {\bf i)} and {\bf ii)}, if a statistical procedure is optimal on a given scale $\Delta$, how does it perform on another scale? Is it possible to construct a single procedure that automatically adapts to each scale $\Delta$, in the sense that it is efficient simultaneously over different time scales?
\end{itemize}
\subsection{Main results}
In this paper, we systematically explore questions i), ii) and iii) in the simplified context of the experiments
${\mathcal E}^{T,\Delta}$ built upon the continuous time random walks model \eqref{first Poisson compose} for transparency. Some extensions to non-homogeneous compound Poisson processes are given, and the generalisation to a more general compound law is also discussed. As for i), we prove in Theorems \ref{structure statistique inter}, \ref{structure statistique micro} and \ref{structure statistique macro} that the LAN condition (Locally Asymptotic Normality\footnote{Recommended references are the textbooks \cite{IH} and \cite{VdV}, but we recall some definitions in Section \ref{resultats de regularite} for sake of completeness.}) holds for all scales $\Delta$. This means that $\PP_\vartheta^{T,\Delta}$ can be approximated -- in appropriate sense -- by the law of a Gaussian shift. We derive in particular the Fisher information of ${\mathcal E}^{T,\Delta}$ and observe that it smoothly depends on the scale $\Delta$. We shall see that the answer to ii) is positive. More precisely, we first prove in Theorem \ref{estimateurs} that the normalised quadratic variation estimator
$$\widehat \vartheta^{QV}_T = \frac{1}{T}\sum_{i = 1}^{\lfloor T \Delta_T^{-1}\rfloor}\big(X_{i\Delta_T}-X_{(i-1)\Delta_T}\big)^2$$
is asymptotically efficient -- it is asymptotically normal and its asymptotic variance is equivalent to the inverse of the Fisher information -- in both microscopic and macroscopic regimes. In the microscopic regime, it stems from the fact that the approximation
$$\widehat \vartheta^{QV}_T \approx \frac{1}{T}\sum_{0 \leq t \leq T} \big(X_t - X_{t^-}\big)^2 = \frac{N_T}{T}$$
becomes valid, as the jumps are $\pm 1$, and the efficiency is then a consequence of $N_T/T$ being the maximum likelihood estimator in the approximation experiment \eqref{microscopic approx exp}. In the macroscopic regime, thanks to the diffusive approximation \eqref{diffusive approximation} we have
$$\widehat \vartheta^{QV}_T \approx \frac{1}{T}\sum_{i = 1}^{\lfloor T \Delta_T^{-1}\rfloor} \big(\sqrt{\vartheta}(W_{i\Delta_T} - W_{(i-1)\Delta_T})\big)^2,$$
which is precisely the maximum likelihood estimator in the macroscopic approximation experiment \eqref{macroscopic approx exp}. Surprisingly, $\widehat \vartheta^{QV}_T$ fails to be efficient when
\begin{equation} \label{intermediate regime}
\Delta_T \rightarrow \Delta_\infty \in (0,\infty).
\end{equation}
More precisely, we show in Theorem \ref{cas intermediaire} that, although rate optimal, $\widehat \vartheta^{QV}_T$ misses the optimal variance by a non-negligible factor, depending on $\Delta_\infty$, that can reach up to $23\%$. This phenomenon is due to the fact that  in the intermediate regime \eqref{intermediate regime}, the process $(X_t)$ is sampled at a rate which has the same order as the intensity of its jumps.
 On the one hand, $(X_{i\Delta_T}-X_{(i-1)\Delta_T})^2$ gives no accurate information whereas a jump has occured or not during the period $[(i-1)\Delta_T, i\Delta_T]$, contrary to the case $\Delta_T \rightarrow 0$. On the other hand, there are not enough jumps to validate the approximation of $X_{i\Delta_T}-X_{(i-1)\Delta_T}$ by a Gaussian random variable, contrary to the case $\Delta_T \rightarrow \infty$. Finally, we construct in Theorem \ref{cas 1-Step} a one-step correction of $\widehat \vartheta^{QV}_T$ that provides an estimator efficient in all scales, giving a positive answer to iii).

The paper is organised as follows. We first propose in Section \ref{building up} a canonical framework for different time scales by considering the family of experiments $\big({\mathcal E}^{T, \Delta_T}\big)_{T>0}$. The way the scale parameter depends on $T$ defines the terms {\it microscopic, intermediate} and {\it macroscopic} scales rigorously.
Specialising to model \eqref{first Poisson compose} for transparency, the results about the structure of the corresponding $\big({\mathcal E}^{T, \Delta_T}\big)_{T>0}$ are stated in Section \ref{resultats de regularite}. We show in Theorems \ref{structure statistique inter}, \ref{structure statistique micro} and \ref{structure statistique macro} that the LAN (Local Asymptotic Normality) property holds simultaneously over all scales and provides an explicit expression for the Fisher information. The proof follows the classical route of \cite{IH} and boils down to obtaining accurate approximations of the distribution
$$f_{\Delta_T}(\vartheta, k) = \PP_\vartheta^{T,\Delta_T}\big(X_{i\Delta_T}-X_{(i-1)\Delta_T}=k\big),\;\;k\in\Z,$$
in the limit $\Delta_T\rightarrow 0$ or $\infty$. Note that $f_{\Delta_T}(\vartheta, k)$ does not depend on $i$ since $(X_t)$ has stationary increments. However explicit, the intricate form of $f_{\Delta_T}(\vartheta, k)$ requires asymptotic expansions of modified Bessel functions of the first kind. In the macroscopic regime however, we were not able to obtain such expansions. We take another route instead, proving directly the asymptotic equivalence in the Le Cam sense, a stronger result at the expense of requiring a rate of convergence of $\Delta_T$ to $\infty$, presumably superfluous. We show in Theorems \ref{estimateurs} and \ref{cas intermediaire} of Section \ref{distortion} that the quadratic variation estimator $\widehat \vartheta^{QV}_T$ is rate optimal and efficient in both microscopic and macroscopic regimes, but {\it not} in the intermediate scales \eqref{intermediate regime}. This negative result is however appended with the construction of an adaptive estimator, constructed by a standard one-step correction of $\widehat \vartheta_T^{QV}$ based on the likelihood at intermediate scales, and efficient over all scales (Theorem \ref{cas 1-Step}). Moreover this estimator has the advantage of being computationally implementable, contrary to the theoretical optimal maximum likelihood estimator. Section  \ref{cas NH} gives some extensions in the case of a non-homogeneous compound Poisson process (Theorem \ref{structure non-homogene}) and addresses the generalisation to more general compound laws. The comparison to related works on estimating L\'evy processes from discrete data is also discussed. Section \ref{preuves} is devoted to the proofs.

\section{Statement of the results} \label{main results}
\subsection{Building up statistical experiments across time scales} \label{building up}
Let $T>0$ and $\Delta >0$ be such that $\Delta \leq T$. On a rich enough probability space $(\Omega, {\mathcal F}, \PP)$, we observe the process $(X_t)$ defined in \eqref{first Poisson compose} at frequency $\Delta^{-1}$ over the period $[0,T]$.
Thus we observe $\boldsymbol{X}$ defined in \eqref{def observation},
and with no loss of generality\footnote{By assuming $X_0=0$, the first data point does not give information about the parameter $\vartheta$. If only asymptotic properties of the statistical model are studied, which is always the case here, it has no effect.}, we take $X_0=0$. We obtain a family of statistical experiments
$${\mathcal E}^{T,\Delta} := \big(\Z^{\lfloor T\Delta^{-1}\rfloor}, {\mathcal P}(\Z^{\lfloor T\Delta^{-1}\rfloor}), \big\{\PP_\vartheta^{T,\Delta},\vartheta \in \Theta\big\}\big),$$
where $\PP_\vartheta^{T,\Delta}$ denotes the law of $\boldsymbol{X}$ when $(X_t)$ has the form \eqref{first Poisson compose}, and $\Theta \subseteq (0,\infty)$ is a  parameter set with non empty interior. The experiment ${\mathcal E}^{T,\Delta}$ is dominated by the counting measure $\mu_T$ on $\Z^{\lfloor \Delta^{-1}T\rfloor}$.
Abusing notation slightly, we may\footnote{By taking for instance $\Omega=\Z^{\lfloor T\Delta_T^{-1}\rfloor}$.} (and will) identify
$\boldsymbol{X}$ with the canonical observation in ${\mathcal E}^{T,\Delta}$.
Since $(X_t)$ has stationary and independent increments under $\PP_\vartheta^{T,\Delta}$, we obtain the following expression for the likelihood
\begin{align*}
\frac{d\PP_\vartheta^{T,\Delta}}{d\mu_T}(\boldsymbol{X}) & = \prod_{i = 1}^{\lfloor T\Delta^{-1}\rfloor}f_{\Delta}(\vartheta, X_{i\Delta}-X_{(i-1)\Delta}),
\end{align*}
where we have set, for $k \in \Z$,
\begin{equation} \label{defdensite1}
f_{\Delta}(\vartheta,k):=\PP_\vartheta^{T,\Delta}\big(X_{i\Delta}-X_{(i-1)\Delta}=k\big) = \PP_\vartheta^{T,\Delta}\big(X_{\Delta}=k\big).
\end{equation}
We shall repeatedly use the terms {\it microscopic, intermediate} and {\it macroscopic} scale (or regime). In order to define these terms precisely, we let $\Delta = \Delta_T$ depend on $T$ with  $0 < \Delta_T \leq T$, and we adopt the following terminology.
\begin{definition}
The sub-family of experiments $({\mathcal E}^{T,\Delta_T})_{T>0}$ is said to be
\begin{enumerate}
\item On a microscopic scale (or regime) if $\Delta_T \rightarrow 0$ as $T \rightarrow \infty$.
\item On an intermediate scale (or regime) if $\Delta_T \rightarrow \Delta_\infty$ as $T \rightarrow \infty$, for some $\Delta_\infty \in (0,\infty)$.
\item On a macroscopic scale (or regime) if $\Delta_T \rightarrow  \infty$ and $T/\Delta_T \rightarrow \infty$ as $T \rightarrow \infty$.
\end{enumerate}
\end{definition}
\subsection{The regularity of $({\mathcal E}^{T,\Delta_T})_{T >0}$ across time scales} \label{resultats de regularite}
Let us recall\footnote{See for instance the textbooks \cite{IH} or \cite{VdV}.}
that the family of experiments $({\mathcal E}^{T,\Delta_T})_{T >0}$ satisfies the Local Asymptotic Normality (LAN) property at point $\vartheta \in \Theta$ with normalisation $I_{T,\Delta_T}(\vartheta)>0$ if, for every $v \in \R$ such that $\vartheta+vI_{T,\Delta_T}(\vartheta)^{1/2}\in \Theta$, the following decomposition holds
\begin{equation} \label{defLAN}
\frac{d\PP_{\vartheta+vI_{T,\Delta_T}(\vartheta)^{1/2}}^{T,\Delta_T}}{d\PP_{\vartheta}^{T,\Delta_T}}(\boldsymbol{X}) = \exp\big(v\, \xi_T-\frac{1}{2}v^2+r_T\big),
\end{equation}
where
\begin{equation} \label{defLAN bis}
\xi_T \rightarrow {\mathcal N}(0,1)\;\;\;\text{in distribution under}\;\;\PP_\vartheta^{T,\Delta_T}\;\;\text{as}\;\;T\rightarrow \infty
\end{equation}
and
\begin{equation} \label{defLAN ter}
r_T \rightarrow 0\;\;\;\text{in probability under}\;\;\PP_\vartheta^{T,\Delta_T}\;\;\text{as}\;\;T\rightarrow \infty.
\end{equation}
If \eqref{defLAN}, \eqref{defLAN bis} and \eqref{defLAN ter} hold, we informally say that $({\mathcal E}^{T,\Delta_T})_{T >0}$ is {\it regular with information} $I_{T,\Delta_T}(\vartheta)$. This means that locally around $\vartheta$, the law of $\boldsymbol{X}$ can be approximated by the law of a Gaussian shift experiment, where one observes a single random variable
$$\boldsymbol{Y} = \vartheta + I_{T,\Delta_T}(\vartheta)^{-1/2}\xi_T,$$
with $\xi_T$ being approximately distributed as a standard Gaussian random variable under $\PP_\vartheta^{T,\Delta_T}$ as $T\rightarrow \infty$. In particular, the optimal rate of convergence for recovering $\vartheta$ from $\boldsymbol{X}$ is the same as the one obtained from $\boldsymbol{Y}$. It is given by $I_{T, \Delta_T}(\vartheta)$ provided $I_{T,\Delta_T}(\vartheta) \rightarrow \infty$ as $T \rightarrow \infty$.
Note also that if the convergence of the remainder term $r_T = r_T(v)$ in \eqref{defLAN ter} holds locally uniformly in $v$, then  $I_{T,\Delta_T}(\vartheta)$ can be replaced by any function $J_{T,\Delta_T}(\vartheta)$ such that
$$J_{T,\Delta_T}(\vartheta) \sim I_{T,\Delta_T}(\vartheta)\;\;\;\text{as}\;\; T \rightarrow \infty$$
without affecting the LAN property. Hereafter, the symbol $\sim$ means asymptotic equivalence up to constants.
Our first result states the LAN property for the experiment $\big(\mathcal{E}^{T,\Delta}\big)_{T >0}$ on every scale $\Delta \in (0,\infty)$.
\begin{theorem}[The intermediate regime] \label{structure statistique inter}
Assume $\Delta_T \rightarrow \Delta_\infty \in (0,\infty)$ as $T \rightarrow \infty$. Then the family $({\mathcal E}^{T,\Delta_T})_{T >0}$ is regular and  we have
$$
I_{T,\Delta_T}(\vartheta) \sim I_{T, \Delta_\infty}(\vartheta) = T\Delta_\infty \Big(\E_{\vartheta}^{T,\Delta_\infty}\big[\big(h_{\Delta_\infty}(\vartheta, X_{\Delta_\infty})-(\vartheta\Delta_{\infty})^{-1}|X_{\Delta_\infty}|-1\big)^2\big]\Big),
$$
with
$$h_{\Delta_\infty}(\vartheta,k)=\frac{{\mathcal I}_{|k|+1}(\vartheta \Delta_\infty)}{{\mathcal I}_{|k|}(\vartheta \Delta_\infty)},$$
where, for $x \in \R$ and $\nu \in \N$,
$${\mathcal I}_{\nu}(x)=\sum_{m\geq 0}\frac{(x/2)^{2m+\nu}}{m! (\nu+m)!}$$
denotes the modified Bessel function of the first kind.
\end{theorem}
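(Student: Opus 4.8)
The plan is to reduce the statement to the classical i.i.d.\ Local Asymptotic Normality theory of \cite{IH}. The starting point is an explicit formula for the one-step increment law: conditioning on $N_\Delta$ in \eqref{first Poisson compose} and recognising the resulting power series, one obtains
$$f_{\Delta}(\vartheta,k)=e^{-\vartheta\Delta}\,{\mathcal I}_{|k|}(\vartheta\Delta),\qquad k\in\Z ,$$
which is a probability on $\Z$ by the identity $e^{x}={\mathcal I}_{0}(x)+2\sum_{j\ge 1}{\mathcal I}_{j}(x)$. By independence and stationarity of the increments, the experiment ${\mathcal E}^{T,\Delta_T}$ is exactly the $n_T$-fold product of the single-observation experiment carried by $f_{\Delta_T}(\vartheta,\cdot)$ on $\Z$, with $n_T=\lfloor T\Delta_T^{-1}\rfloor\to\infty$. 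It therefore suffices to prove that the one-observation family $\{f_{\Delta}(\vartheta,\cdot):\vartheta\in\Theta\}$ is regular (differentiable in quadratic mean) with Fisher information $i_{\Delta}(\vartheta)>0$, with all estimates locally uniform in $\Delta$ near $\Delta_\infty$, and that $\Delta\mapsto i_{\Delta}(\vartheta)$ is continuous there; the normalisation is then $I_{T,\Delta_T}(\vartheta)=n_T\,i_{\Delta_T}(\vartheta)$, and $n_T\sim T/\Delta_\infty$ together with $i_{\Delta_T}(\vartheta)\to i_{\Delta_\infty}(\vartheta)$ gives $I_{T,\Delta_T}(\vartheta)\sim I_{T,\Delta_\infty}(\vartheta)$.

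Next I would compute the score. Writing $\log f_{\Delta}(\vartheta,k)=-\vartheta\Delta+\log{\mathcal I}_{|k|}(\vartheta\Delta)$ and using the recurrence ${\mathcal I}_{\nu}'(x)={\mathcal I}_{\nu+1}(x)+\tfrac{\nu}{x}{\mathcal I}_{\nu}(x)$, the score $\partial_\vartheta\log f_{\Delta}(\vartheta,k)$ equals $\Delta$ times the expression appearing inside the expectation in the statement; its $\PP_\vartheta$-variance is the single-observation Fisher information, which equals $\Delta^{2}$ times that displayed expectation, so multiplying by $n_T\sim T/\Delta_\infty$ and passing to the limit $\Delta_T\to\Delta_\infty$ produces the stated formula for $I_{T,\Delta_\infty}(\vartheta)$. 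All the integrability one needs is cheap: $0<h_{\Delta}(\vartheta,k)<1$ by a standard monotonicity property of modified Bessel functions (and $h_\Delta(\vartheta,k)\to 0$ as $|k|\to\infty$), while $|X_\Delta|\le N_\Delta$ has a Poisson$(\vartheta\Delta)$ law, so the score is dominated by a fixed multiple of $1+N_\Delta$ and thus has finite moments of all orders, with bounds locally uniform in $(\vartheta,\Delta)$.

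It remains to verify differentiability in quadratic mean and to run the LAN expansion. The map $\vartheta\mapsto\sqrt{f_{\Delta}(\vartheta,\cdot)}$ is differentiable in $\ell^{2}(\Z)$ by dominated convergence, since $f_{\Delta}(\vartheta,k)$ is smooth in $\vartheta$ on a fixed state space and the bound ${\mathcal I}_{|k|}(\vartheta\Delta)\le (\vartheta\Delta/2)^{|k|}e^{\vartheta\Delta/2}/|k|!$ yields super-exponential decay of $f_{\Delta}(\vartheta,\cdot)$ and of its $\vartheta$-derivatives, uniformly for $(\vartheta,\Delta)$ in a compact set. Setting $\vartheta_T=\vartheta+v\,I_{T,\Delta_T}(\vartheta)^{-1/2}$ and $Y_i=X_{i\Delta_T}-X_{(i-1)\Delta_T}$, the product log-likelihood ratio then admits the decomposition $v\,\xi_T-\tfrac12 v^{2}+r_T$ with $\xi_T=I_{T,\Delta_T}(\vartheta)^{-1/2}\sum_{i=1}^{n_T}\partial_\vartheta\log f_{\Delta_T}(\vartheta,Y_i)$, the quadratic term concentrating at $-\tfrac12 v^{2}$ by the law of large numbers (using $n_T i_{\Delta_T}(\vartheta)=I_{T,\Delta_T}(\vartheta)$ and $i_{\Delta_T}\to i_{\Delta_\infty}$) and $r_T\to 0$ in $\PP_\vartheta^{T,\Delta_T}$-probability by the quadratic-mean differentiability combined with the moment bounds above; this holds locally uniformly in $v$, which legitimates replacing $I_{T,\Delta_T}(\vartheta)$ by the equivalent $I_{T,\Delta_\infty}(\vartheta)$. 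Finally $\xi_T\Rightarrow{\mathcal N}(0,1)$ by the Lyapunov central limit theorem for the row-wise i.i.d.\ triangular array $\{\partial_\vartheta\log f_{\Delta_T}(\vartheta,Y_i)\}$, the Lyapunov condition being exactly the uniform $(2+\eta)$-moment bound; and $I_{T,\Delta_T}(\vartheta)\to\infty$ because $T/\Delta_T\to\infty$ in the intermediate regime.

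The density identification and the i.i.d.\ LAN machinery are routine. The genuine technical point — and the only place where the varying scale $\Delta_T$ plays a role — is the \emph{uniformity in $\Delta$}: one must carry the quadratic-mean differentiability of $\vartheta\mapsto\sqrt{f_{\Delta}(\vartheta,\cdot)}$, the continuity of $\Delta\mapsto i_{\Delta}(\vartheta)$, and the $(2+\eta)$-moment control of the score with constants uniform over $\Delta$ in a neighbourhood of $\Delta_\infty$, so that the remainder estimates are insensitive to the precise value of $\Delta_T$. This is exactly where the explicit Bessel representation of $f_{\Delta}$ — through the bounds $0<h_{\Delta}(\vartheta,k)<1$ and the super-exponential tail of $f_{\Delta}(\vartheta,\cdot)$ — does the work, and why, in contrast to the microscopic and macroscopic regimes, no asymptotic expansion of the Bessel functions is needed here.
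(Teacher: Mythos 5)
Your proposal is correct and follows essentially the same route as the paper: derive the Bessel density $f_\Delta(\vartheta,k)=e^{-\vartheta\Delta}\,{\mathcal I}_{|k|}(\vartheta\Delta)$, compute the score via the recurrence ${\mathcal I}_\nu'={\mathcal I}_{\nu+1}+\tfrac{\nu}{x}{\mathcal I}_\nu$, control it by $0\le h_\Delta\le 1$ (Nasell's inequality) and Poisson moments, verify Hellinger differentiability, and invoke i.i.d.\ LAN theory with the continuity of $\Delta\mapsto i_\Delta(\vartheta)$ to replace $\Delta_T$ by $\Delta_\infty$. The paper packages the final step by checking the two conditions of Theorem 3.1' in Ibragimov--Hasminskii (a second-derivative bound and a Lindeberg-type tail condition) rather than by a direct law-of-large-numbers plus Lyapunov CLT verification, but these are interchangeable, and your emphasis on uniformity in $\Delta$ near $\Delta_\infty$ is exactly the point the paper's Lemma~\ref{equivalents info} handles.
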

\begin{remark}
By taking $\Delta_T = \Delta_\infty \in (0,\infty)$ constant, we include the case of a fixed $\Delta$, therefore the same regularity result holds for $\big({\mathcal E}^{T,\Delta}\big)_{T > 0}$. An inspection of the proof of Theorem \ref{structure statistique inter} reveals that the mapping $\Delta \leadsto I_{T,\Delta}(\vartheta)$ is continuous over $(0,\infty)$.
\end{remark}
Our next result shows that formally we can let $\Delta_\infty \rightarrow  0$ in the expression of $I_{T,\Delta_\infty}(\vartheta)$ given by Theorem \ref{structure statistique inter} in the microscopic case. Moreover we obtain a simplified expression for the information rate.
\begin{theorem}[The microscopic case] \label{structure statistique micro}
Assume $\Delta_T \rightarrow 0$ as $T \rightarrow \infty.$
Then the family $({\mathcal E}^{T,\Delta_T})_{T >0}$ is regular and we have
$$I_{T,\Delta_T}(\vartheta) \sim I_{T,0}(\vartheta)  := \lim_{\Delta \rightarrow 0}I_{T, \Delta}(\vartheta) = \frac{T}{\vartheta}.$$
\end{theorem}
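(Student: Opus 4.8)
The plan is to follow the classical programme of \cite{IH} exactly as in the proof of Theorem~\ref{structure statistique inter}, the only genuinely new ingredient being the analysis of the increment law $f_{\Delta_T}(\vartheta,\cdot)$ in the regime $\Delta_T\to 0$ rather than $\Delta_T\to\Delta_\infty$. First I would record the product structure: since $(X_t)$ has stationary independent increments, $\mathcal{E}^{T,\Delta_T}$ is the $n_T$-fold product, with $n_T:=\lfloor T\Delta_T^{-1}\rfloor$, of the one-dimensional experiment on $\Z$ generated by $\{f_{\Delta_T}(\vartheta,\cdot):\vartheta\in\Theta\}$, and recall the closed form $f_\Delta(\vartheta,k)=e^{-\vartheta\Delta}\,\mathcal{I}_{|k|}(\vartheta\Delta)$ (obtained by conditioning on $N_\Delta$ and using $\sum_{k\in\Z}\mathcal{I}_k(x)=e^x$). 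For such an i.i.d.\ array, the LAN decomposition \eqref{defLAN}--\eqref{defLAN ter} with normalisation $I_{T,\Delta_T}(\vartheta)=n_T\,i_{\Delta_T}(\vartheta)$, where $i_\Delta(\vartheta):=\E_\vartheta^{T,\Delta}\big[(\partial_\vartheta\log f_\Delta(\vartheta,X_\Delta))^2\big]$ is the per-increment Fisher information, reduces to four verifications: (i) differentiability in quadratic mean of $\vartheta\mapsto\sqrt{f_{\Delta_T}(\vartheta,\cdot)}$; (ii) the growth condition $I_{T,\Delta_T}(\vartheta)\to\infty$; (iii) a Lindeberg-type negligibility giving the central limit theorem \eqref{defLAN bis} for the score; and (iv) negligibility of the second-order Taylor remainder, yielding \eqref{defLAN ter}. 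These are the same four points handled in Theorem~\ref{structure statistique inter}; only the scale at which the Bessel quantities are expanded changes.

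The second step is the small-$\Delta$ asymptotics. From the series defining $\mathcal{I}_\nu$ one has $\mathcal{I}_0(x)=1+x^2/4+O(x^4)$, $\mathcal{I}_1(x)=(x/2)(1+O(x^2))$ and $\mathcal{I}_\nu(x)=(x/2)^\nu(\nu!)^{-1}(1+O(x^2))$ for $\nu\ge2$, and these may be differentiated termwise. Evaluated at $x=\vartheta\Delta$ this gives, locally uniformly in $\vartheta$ on compacts of $(0,\infty)$, $f_\Delta(\vartheta,0)=1-\vartheta\Delta+O(\Delta^2)$, $f_\Delta(\vartheta,\pm1)=\vartheta\Delta/2+O(\Delta^2)$, $f_\Delta(\vartheta,k)=O(\Delta^{|k|})$ for $|k|\ge2$, with $\partial_\vartheta f_\Delta(\vartheta,0)=-\Delta+O(\Delta^2)$ and $\partial_\vartheta f_\Delta(\vartheta,\pm1)=\Delta/2+O(\Delta^2)$. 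Hence in $i_\Delta(\vartheta)=\sum_{k\in\Z}(\partial_\vartheta f_\Delta(\vartheta,k))^2/f_\Delta(\vartheta,k)$ the contribution of $k=0$ is $O(\Delta^2)$, that of $|k|\ge2$ is $O(\Delta^2)$ (dominated by $k=\pm2$), while $k=\pm1$ contributes $2\cdot(\Delta/2)^2/(\vartheta\Delta/2)\cdot(1+O(\Delta))=(\Delta/\vartheta)(1+O(\Delta))$, so $i_\Delta(\vartheta)=(\Delta/\vartheta)(1+O(\Delta))$ as $\Delta\to0$. Therefore $I_{T,\Delta_T}(\vartheta)=\lfloor T\Delta_T^{-1}\rfloor\,i_{\Delta_T}(\vartheta)=(T/\vartheta)(1+o(1))\to\infty$, which is both the growth condition and the claimed equivalence $I_{T,\Delta_T}(\vartheta)\sim T/\vartheta$. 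The same expansion shows that the bracketed expectation in Theorem~\ref{structure statistique inter} behaves like $(\vartheta\Delta)^{-1}(1+o(1))$ as $\Delta\to0$ (its dominant contribution comes from the events $X_\Delta=\pm1$), so $I_{T,\Delta}(\vartheta)\to T/\vartheta$, which is the asserted identity $I_{T,0}(\vartheta):=\lim_{\Delta\to0}I_{T,\Delta}(\vartheta)=T/\vartheta$.

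For the LAN verifications themselves, (i) is immediate because $\vartheta\mapsto f_\Delta(\vartheta,k)$ is real-analytic and strictly positive with $i_\Delta(\vartheta)<\infty$, and this analyticity also supplies a second-order Taylor expansion of $\log f_{\Delta_T}(\cdot,k)$ around $\vartheta$ with remainder controlled by $\sup_{\vartheta'}\big|\partial_{\vartheta'}^3\log f_{\Delta_T}(\vartheta',k)\big|$. For (iii), the increments $Y_i=X_{i\Delta_T}-X_{(i-1)\Delta_T}$ are i.i.d.\ with tails decaying super-geometrically in $|k|$ ($\PP_\vartheta(|Y_1|=k)=O(\Delta_T^{k})$ for each $k$) while $|\partial_\vartheta\log f_{\Delta_T}(\vartheta,k)|$ is at most linear in $|k|$, so the Lindeberg condition for the triangular array $\{(n_Ti_{\Delta_T}(\vartheta))^{-1/2}\partial_\vartheta\log f_{\Delta_T}(\vartheta,Y_i)\}_{i\le n_T}$ holds trivially and $\xi_T:=(n_Ti_{\Delta_T}(\vartheta))^{-1/2}\sum_{i\le n_T}\partial_\vartheta\log f_{\Delta_T}(\vartheta,Y_i)$ converges to $\mathcal N(0,1)$ under $\PP_\vartheta^{T,\Delta_T}$; the corresponding reparametrised log-likelihood ratio then splits into $v\xi_T-\tfrac12 v^2$ plus the remainder $r_T$ of (iv), the quadratic term matching $-\tfrac12 v^2$ exactly by the choice of normalisation.

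The hard part will be (iv). Writing $\vartheta_T=\vartheta+vI_{T,\Delta_T}(\vartheta)^{-1/2}$, so that $\vartheta_T-\vartheta=O(T^{-1/2})$, the remainder is $O(T^{-3/2})\sum_{i\le n_T}\sup_{\vartheta'}\big|\partial_{\vartheta'}^3\log f_{\Delta_T}(\vartheta',Y_i)\big|$ in probability, and the crude bound $\sup_{\vartheta'}|\partial_{\vartheta'}^3\log f_{\Delta_T}|=O(1)$ would only give $r_T=O(n_TT^{-3/2})=O(\Delta_T^{-1}T^{-1/2})$, which need not vanish when $\Delta_T\to0$ very fast. The resolution is that $\partial_\vartheta^3\log f_{\Delta_T}(\cdot,k)$ is $O(1)$ only for $k=\pm1$, is $O(\Delta_T^2)$ for $k=0$, and is $O(|k|)$ on the exponentially rare event $|k|\ge2$; classifying the $n_T$ increments accordingly --- roughly $\vartheta T$ of them equal to $\pm1$, each contributing $O(T^{-3/2})$ to $r_T$, and roughly $n_T$ equal to $0$, each contributing $O(\Delta_T^2T^{-3/2})$ --- gives $r_T=O_\PP(T^{-1/2})+O_\PP(\Delta_TT^{-1/2})\to0$, the $|k|\ge2$ terms being negligible by their super-geometric probabilities. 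This accounting, in which the $\pm1$-jump structure of the compound Poisson process is used decisively, is carried out exactly as in Theorem~\ref{structure statistique inter} with the small-$\Delta$ Bessel expansions replacing the fixed-$\Delta_\infty$ ones, which completes the argument.
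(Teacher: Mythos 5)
Your proposal follows the same programme as the paper: reduce to the product structure of $\mathcal{E}^{T,\Delta_T}$, invoke the Ibragimov--Hasminskii LAN criterion for i.i.d.\ triangular arrays (the paper uses Theorem~3.1' p.~128 of \cite{IH}), and feed in small-$\Delta$ asymptotics of the modified Bessel functions. Your identification of the dominant contribution to the per-increment Fisher information --- namely that $k=\pm1$ contributes $\Delta/\vartheta$ while $k=0$ and $|k|\geq 2$ give $O(\Delta^2)$ --- is correct and matches what the paper extracts from Lemma~\ref{exp Bessel 0} and the representation \eqref{technical Fisher}: the per-increment information is $i_\Delta(\vartheta)=(\Delta/\vartheta)(1+O(\Delta))$, hence $I_{T,\Delta_T}\sim T/\vartheta$.

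The one place you diverge is the treatment of the log-likelihood remainder. You pose the problem as a third-order Taylor expansion of $\log f_{\Delta_T}$, worry (correctly) that the crude bound $\sup|\partial_\vartheta^3\log f_{\Delta_T}|=O(1)$ only gives $r_T=O(\Delta_T^{-1}T^{-1/2})$ which can blow up, and then repair this by a case-by-case accounting over the value of $Y_i$. This works, but the paper avoids the issue entirely: the IH condition it verifies involves $\sum_k\bigl|\partial_\vartheta^2\bigl(f_{\Delta_T}(\vartheta,k)^{1/2}\bigr)\bigr|^2$, and this sum is controlled directly by $\E_\vartheta^{T,\Delta_T}\bigl[\mathcal{H}_{\Delta_T}(\vartheta,X_{\Delta_T})^2\bigr]\leq c''(\vartheta,\Delta_T)\Delta_T$, via the uniform bounds $0\leq h_{\Delta_T}\leq 1$, $0\leq\mathcal{I}_{|k|+2}/\mathcal{I}_{|k|}\leq1$ (Lemma~\ref{lemme Bessel}) and the fourth moment of $X_{\Delta_T}$ from \eqref{moment 4}. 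Combined with $\mathfrak{I}_{T,\Delta_T}(\vartheta)\asymp T$, the normalising factor $n_T/\mathfrak{I}_{T,\Delta_T}(\vartheta_0)^2$ makes this contribution of order $1/T$ regardless of how fast $\Delta_T\to0$, so no jump-type bookkeeping is needed. Your refined argument is sound but does more work than necessary; the paper's moment bound on $\mathcal{H}_{\Delta_T}^2$ is the shorter path because the normalisation already absorbs the potentially dangerous $\Delta_T^{-1}$ factor. Everything else --- the identification of $I_{T,0}(\vartheta)=T/\vartheta$, the Lindeberg verification, and the reduction via stationary independent increments --- is as in the paper.
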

The macroscopic case is a bit more involved. In that case, we cannot formally let $\Delta_\infty \rightarrow \infty$ in the expression of $I_{T,\Delta_\infty}(\vartheta)$ given by Theorem \ref{structure statistique inter}. However we have the following simplification.
\begin{theorem}[The macroscopic case] \label{structure statistique macro}
Assume $\Delta_T \rightarrow \infty$,
$T/\Delta_T \rightarrow \infty$ as $T\rightarrow \infty$ and $T/\Delta_T^{1+\frac{1}{4}}=o\big((\log( T/\Delta_T))^{-\frac{1}{4}}\big)$. Then the family $({\mathcal E}^{T,\Delta_T})_{T >0}$ is regular and we have
$$I_{T,\Delta_T}(\vartheta) \sim I_{T,\infty}(\vartheta) := \frac{T\Delta_T^{-1}}{2\vartheta^2}.$$
\end{theorem}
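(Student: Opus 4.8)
The plan is to bypass the modified‑Bessel asymptotics entirely and to prove the stronger statement that, in the Le Cam sense, $\mathcal{E}^{T,\Delta_T}$ is asymptotically equivalent to the \emph{macroscopic approximation experiment} $\mathcal{G}^{T,\Delta_T}$ of \eqref{macroscopic approx exp}, i.e.\ to observing $n:=\lfloor T\Delta_T^{-1}\rfloor$ independent copies of $\mathcal{N}(0,\vartheta\Delta_T)$. This is enough, because $\mathcal{G}^{T,\Delta_T}$ is a smooth i.i.d.\ Gaussian scale family, for which LAN is classical and the Fisher information is explicit: the maximum likelihood estimator $(n\Delta_T)^{-1}\sum_{i=1}^n V_i^2$ of $\vartheta$ (with $V_i$ the Gaussian increments) has variance $2\vartheta^2/n$, hence information $n/(2\vartheta^2)$, which is $\sim I_{T,\infty}(\vartheta)=\frac{T\Delta_T^{-1}}{2\vartheta^2}$ since $T/\Delta_T\to\infty$. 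If the Le Cam deficiency $\delta(\mathcal{E}^{T,\Delta_T},\mathcal{G}^{T,\Delta_T})$ tends to $0$, then the same deficiency bound holds between the localised and $I$-rescaled versions, so the LAN expansion of $\mathcal{G}^{T,\Delta_T}$ transfers verbatim to $\mathcal{E}^{T,\Delta_T}$ with the same normalisation; together with the remark on replacing $I_{T,\Delta_T}(\vartheta)$ by an $\sim$‑equivalent quantity, this gives the claim. The rate assumption $T\Delta_T^{-5/4}=o\big((\log(T/\Delta_T))^{-1/4}\big)$ will be consumed in showing $\delta(\mathcal{E}^{T,\Delta_T},\mathcal{G}^{T,\Delta_T})\to0$.

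Both experiments are $n$‑fold products of single‑observation experiments, $\{f_{\Delta_T}(\vartheta,\cdot)\text{ on }\Z\}$ and $\{\mathcal{N}(0,\vartheta\Delta_T)\}$. To place them on a common sample space I would first \emph{dither} the lattice: the randomisation spreading each atom $f_{\Delta_T}(\vartheta,k)$ uniformly over $[k-\tfrac12,k+\tfrac12]$ is reversible (rounding recovers the integer), hence produces an equivalent experiment with density $\widetilde f_{\Delta_T}(\vartheta,\cdot)$ on $\R$. Using the identity kernel, the tensorisation inequality $H^2(P^{\otimes n},Q^{\otimes n})\le n\,H^2(P,Q)$ and the domination of the Le Cam distance by Hellinger distance,
$$
\delta\big(\mathcal{E}^{T,\Delta_T},\mathcal{G}^{T,\Delta_T}\big)\ \le\ \sqrt n\,\sup_{\vartheta\in K} H\big(\widetilde f_{\Delta_T}(\vartheta,\cdot),\phi_{\vartheta\Delta_T}\big),
$$
with $\phi_{\sigma^2}$ the $\mathcal{N}(0,\sigma^2)$ density and $K$ a compact neighbourhood of the point $\vartheta$. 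Everything is thus reduced to the single‑observation estimate $\sup_{\vartheta\in K} H^2\big(\widetilde f_{\Delta_T}(\vartheta,\cdot),\phi_{\vartheta\Delta_T}\big)=o(\Delta_T/T)$.

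For this estimate I would use a strong approximation of the increment instead of its density. Writing $X_{\Delta_T}=\sum_{j=1}^{N_{\Delta_T}}\varepsilon_j$ with $N_{\Delta_T}\sim\mathrm{Poisson}(\vartheta\Delta_T)$, embed the simple symmetric walk into a Brownian motion $B$ by the Dubins--Skorokhod scheme $\sum_{j\le m}\varepsilon_j=B_{\tau_m}$, $\tau_m=\sum_{j\le m}\gamma_j$, $\E[\gamma_1]=1$, with $B$ independent of $N$; then $X_{\Delta_T}=B_{\tau_{N_{\Delta_T}}}$ while $\sqrt\vartheta\,W_{\Delta_T}=B_{\vartheta\Delta_T}$ for the standard Brownian motion $W_t=\vartheta^{-1/2}B_{\vartheta t}$. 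The time distortion obeys $|\tau_{N_{\Delta_T}}-\vartheta\Delta_T|\le\big|\sum_{j\le N_{\Delta_T}}(\gamma_j-1)\big|+|N_{\Delta_T}-\vartheta\Delta_T|=O_{\PP}\big(\sqrt{\Delta_T\log\Delta_T}\big)$ with exponential tails ($\gamma_1$ and the Poisson law being light‑tailed), whence $|X_{\Delta_T}-\sqrt\vartheta\,W_{\Delta_T}|=O_{\PP}\big((\Delta_T\log\Delta_T)^{1/4}\big)$ uniformly in $\vartheta\in K$. To convert the coupling into a Hellinger bound, convolve both laws with a common Gaussian dithering $\mathcal{N}(0,\tau_{\Delta_T}^2)$ with $(\Delta_T\log\Delta_T)^{1/4}\ll\tau_{\Delta_T}$ and $\tau_{\Delta_T}^2=o(\Delta_T)$: this is again a valid randomisation, it moves the Gaussian family only to $\{\mathcal{N}(0,\vartheta\Delta_T+\tau_{\Delta_T}^2)\}$, whose information still converges to $n/(2\vartheta^2)$, and by joint convexity of squared Hellinger distance the two convolved densities satisfy $H^2=O\big(\tau_{\Delta_T}^{-2}\,\E[(X_{\Delta_T}-\sqrt\vartheta W_{\Delta_T})^2]\big)=O\big(\tau_{\Delta_T}^{-2}(\Delta_T\log\Delta_T)^{1/2}\big)$. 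Inserting this into the displayed inequality and optimising $\tau_{\Delta_T}$, $\sqrt n\,H(\cdots)\to0$ under a polynomial growth condition on $\Delta_T$ of exactly the assumed form (and in fact a somewhat weaker one would do).

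The real obstacle is this single‑observation comparison. The natural alternative route would estimate $f_{\Delta_T}(\vartheta,k)=e^{-\vartheta\Delta_T}\mathcal{I}_{|k|}(\vartheta\Delta_T)$ against $\phi_{\vartheta\Delta_T}(k)$ via uniform asymptotics of $\mathcal{I}_\nu(x)$ as $x\to\infty$ for $\nu$ up to order $\sqrt{x\log x}$: a leading‑order expansion already yields a local limit theorem, but one needs a usable remainder throughout that whole range, and it is precisely there — as the authors indicate — that clean estimates were not available, which forces the detour through strong approximation. The price is the (presumably non‑sharp) polynomial rate requirement on $\Delta_T$, and the delicate part is keeping three quantities balanced at once — the coupling error of order $\Delta_T^{1/4}\,\mathrm{polylog}$, the dithering radius $\tau_{\Delta_T}$, and the $\sqrt n$ loss from tensorisation — while exhibiting Markov kernels in both directions so as to control the \emph{two‑sided} Le Cam distance; a slightly more robust variant couples the whole trajectory $(X_t)_{t\le T}$ to $(\sqrt\vartheta W_t)_{t\le T}$ before discretising, at the cost of replacing $\Delta_T$ by $T$ in the coupling error. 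Once $\delta(\mathcal{E}^{T,\Delta_T},\mathcal{G}^{T,\Delta_T})\to0$ is established, regularity of $(\mathcal{E}^{T,\Delta_T})_{T>0}$ with $I_{T,\Delta_T}(\vartheta)\sim I_{T,\infty}(\vartheta)=\frac{T\Delta_T^{-1}}{2\vartheta^2}$ follows by transporting the Gaussian LAN expansion through the equivalence.
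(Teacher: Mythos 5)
Your overall strategy coincides with the paper's: dither the lattice by an independent uniform noise $U_i\in[-\tfrac12,\tfrac12]$ (reversible, hence producing an equivalent experiment on $\R^n$), reduce to a single-observation comparison by tensorisation, and deduce LAN by transferring it from the Gaussian scale family through Le Cam equivalence. Where you part ways is in how the single-observation distance is controlled. The paper bounds $\norm{\mathcal{L}(Y_1^{\Delta_T})-\mathcal{N}(0,\vartheta\Delta_T)}_{TV}$ directly: a Cauchy--Schwarz truncation at level $\eta_T=\kappa\sqrt{\Delta_T\log(T/\Delta_T)}$ splits it into two exponentially small tails (handled by Hoeffding and Chernov) and a bulk term $\sqrt{2\eta_T}\bigl(\int(p-q)^2\bigr)^{1/2}$ estimated via Plancherel on the explicit characteristic functions $e^{-\vartheta\Delta_T(1-\cos\xi)}\tfrac{\sin(\xi/2)}{\xi/2}$ versus $e^{-\vartheta\Delta_T\xi^2/2}$, yielding $\int(p-q)^2=O(\Delta_T^{-1})$ and hence the stated rate restriction. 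No coupling, no further smoothing. Your replacement of this step by a Skorokhod embedding plus a second Gaussian convolution is a genuinely different device, and the Hellinger tensorisation would indeed give a milder polynomial restriction on $\Delta_T$ if the reduction were complete.

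It is not complete, however, and the gap is exactly at the point you yourself flag in passing. The uniform dithering is invertible (round to the nearest integer), so $\mathcal{E}^{T,\Delta_T}$ and $\widetilde{\mathcal{E}}^{T,\Delta_T}$ are genuinely equivalent; the paper then compares $\widetilde{\mathcal{E}}^{T,\Delta_T}$ to $\mathcal{Q}^{T,\Delta_T}$ on the common sample space $\R^n$, and a small TV distance there controls the Le Cam deficiency in \emph{both} directions. Your extra convolution by $\mathcal{N}(0,\tau_{\Delta_T}^2)$, by contrast, is not invertible: you obtain a Markov kernel from $\widetilde{\mathcal{E}}^{T,\Delta_T}$ to the convolved experiment $\widetilde{\mathcal{E}}''$, and from $\mathcal{Q}^{T,\Delta_T}$ to $\mathcal{Q}''=\{\mathcal{N}(0,\vartheta\Delta_T+\tau_{\Delta_T}^2)^{\otimes n}\}$, but not the reverse. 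Your coupling-plus-convexity argument bounds $H^2$ between the densities of $\widetilde{\mathcal{E}}''$ and $\mathcal{Q}''$, so what you actually prove is $\Delta(\widetilde{\mathcal{E}}'',\mathcal{Q}'')\to 0$. From this, together with $\delta(\widetilde{\mathcal{E}},\widetilde{\mathcal{E}}'')=0$, you only get the one-sided statement $\delta(\widetilde{\mathcal{E}},\mathcal{Q}'')\to 0$. This shows that $\widetilde{\mathcal{E}}$ is \emph{at least as informative as} a Gaussian scale family with information $\sim n/(2\vartheta^2)$, but it does not establish that $\widetilde{\mathcal{E}}$ has the LAN expansion with that normalisation -- the convolved experiment could a priori be strictly less informative than $\widetilde{\mathcal{E}}$ (compare: a perfectly informative experiment has deficiency $0$ to anything, without being LAN). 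The observation that the Fisher information of $\mathcal{Q}''$ still converges to $n/(2\vartheta^2)$ does not close this gap, because information agreement alone does not yield the local normal expansion, and the one route the paper uses to identify $\mathfrak{I}_{T,\Delta_T}$ with $I_{T,\Delta_T}$ directly (Lemma \ref{equivalents info}) is precisely the one that is unavailable in the macroscopic regime. To repair the argument you would need to supply a Markov kernel in the missing direction -- for instance a local deterministic rescaling $x\mapsto\bigl(1-\tau_{\Delta_T}^2/(\vartheta_0\Delta_T)\bigr)^{1/2}x$ carrying $\mathcal{Q}''$ back onto $\mathcal{Q}$ up to asymptotically negligible Hellinger error in the LAN localisation, and then a triangle inequality to avoid ever having to deconvolve $\widetilde{\mathcal{E}}''$. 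As written, the proposal does not do this, and the claim ``this is again a valid randomisation'' papers over a real loss of information; the Fourier--Plancherel route chosen in the paper is precisely what lets one stay on the invertibly-dithered experiment and avoid the issue.
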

The condition $T/\Delta_T^{1+\frac{1}{4}}=o\big((\log( T/\Delta_T))^{-\frac{1}{4}}\big)$ is technical but quite stringent; it stems from our method of proof, see Section \ref{preuve macro}. It is presumably superfluous, but we do not know how to relax it.
\subsection{The distortion of information across time scales} \label{distortion}

On each scale $\Delta >0$, let us introduce the empirical quadratic variation estimator
$$\widehat\vartheta_{T,\Delta}^{QV}=\frac{1}{T}\sum_{i=1}^{\left\lfloor T\Delta^{-1}\right\rfloor} (X_{i\Delta}-X_{(i-1)\Delta})^2$$
that mimics the behaviour of the maximum likelihood estimator in both macroscopic and microscopic regimes (see Section \ref{introduction}). More precisely, we have the following asymptotic normality result.
\begin{theorem} \label{estimateurs}
Let $\Delta = \Delta_T >0$ be such that $T/\Delta_T \rightarrow \infty$ as $T \rightarrow \infty$. We have
$$\widehat\vartheta_{T,\Delta_T}^{QV}=\vartheta+\big(I_{T,0}(\vartheta)^{-1}+ I_{T,\infty}(\vartheta)^{-1}\big)^{1/2}\xi_T,$$
where $\xi_T \rightarrow \mathcal{N}(0,1)$ in distribution under $\PP_{\vartheta}^{T,\Delta}$, and $I_{T,0}(\vartheta)$ and $I_{T,\infty}(\vartheta)$ are the information of the microscopic and macroscopic experiments given in Theorems \ref{structure statistique micro} and \ref{structure statistique macro} respectively.
\end{theorem}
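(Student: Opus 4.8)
\medskip
\noindent\emph{Proof strategy.} The plan is to reduce the statement to a central limit theorem for a triangular array of i.i.d.\ summands and a negligible bias term. Put $n_T:=\lfloor T\Delta_T^{-1}\rfloor$ and $Z_i:=X_{i\Delta_T}-X_{(i-1)\Delta_T}$ for $i=1,\dots,n_T$. Since $(X_t)$ has stationary and independent increments under $\PP_\vartheta^{T,\Delta_T}$, the $Z_i$ are i.i.d.\ with the law of $X_{\Delta_T}=\sum_{j=1}^{N_{\Delta_T}}\varepsilon_j$, where $N_{\Delta_T}\sim\mathrm{Poisson}(\vartheta\Delta_T)$. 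Conditioning on $N_{\Delta_T}$ and using $\E[\varepsilon_1]=0$, $\E[\varepsilon_1^2]=1$ and $\E\big[(\sum_{j\le m}\varepsilon_j)^4\big]=3m^2-2m$, I would first record the exact identities
\begin{equation*}
\E_\vartheta^{T,\Delta_T}\!\big[X_{\Delta_T}^2\big]=\vartheta\Delta_T,\qquad
\Var_\vartheta^{T,\Delta_T}\!\big(X_{\Delta_T}^2\big)=\vartheta\Delta_T+2\vartheta^2\Delta_T^2,
\end{equation*}
and note that, by Theorems \ref{structure statistique micro} and \ref{structure statistique macro}, $I_{T,0}(\vartheta)^{-1}+I_{T,\infty}(\vartheta)^{-1}=(\vartheta+2\vartheta^2\Delta_T)/T$, which equals $n_T\,\Var_\vartheta^{T,\Delta_T}(X_{\Delta_T}^2)/T^2$ up to a factor tending to $1$, because $n_T\Delta_T/T\to1$. (It is illuminating, and matches the heuristic of the Introduction, to split $X_{\Delta_T}^2=N_{\Delta_T}+2\!\sum_{j<k\le N_{\Delta_T}}\varepsilon_j\varepsilon_k$ into a ``Poissonian'' part of variance $\vartheta\Delta_T$ and an uncorrelated ``diffusive'' cross term of variance $2\vartheta^2\Delta_T^2$, but this decomposition is not needed for the argument.)

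Next I would dispose of the bias. Writing $\widehat\vartheta_{T,\Delta_T}^{QV}=T^{-1}\sum_{i=1}^{n_T}Z_i^2$, we get $\E_\vartheta^{T,\Delta_T}[\widehat\vartheta_{T,\Delta_T}^{QV}]-\vartheta=\vartheta(n_T\Delta_T-T)/T$, hence $|\E_\vartheta^{T,\Delta_T}[\widehat\vartheta_{T,\Delta_T}^{QV}]-\vartheta|\le\vartheta\Delta_T/T$. Dividing by $(I_{T,0}(\vartheta)^{-1}+I_{T,\infty}(\vartheta)^{-1})^{1/2}$ produces a deterministic quantity of order $O(\sqrt{\Delta_T/T})$, which tends to $0$ since $T/\Delta_T\to\infty$. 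So it remains to prove that the normalised centred sum $\big(n_T\Var_\vartheta^{T,\Delta_T}(X_{\Delta_T}^2)\big)^{-1/2}\sum_{i=1}^{n_T}\big(Z_i^2-\vartheta\Delta_T\big)$ converges in distribution to $\mathcal N(0,1)$ under $\PP_\vartheta^{T,\Delta_T}$; an application of Slutsky's lemma, absorbing the deterministic bias and the scalar factor tending to $1$, then yields the stated representation of $\widehat\vartheta_{T,\Delta_T}^{QV}$.

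For the central limit theorem I would invoke the Lyapunov criterion for row-wise i.i.d.\ triangular arrays with fourth moments, applied to $W_{T,i}:=Z_i^2-\vartheta\Delta_T$. Writing $\lambda:=\vartheta\Delta_T$ and conditioning once more on $N_{\Delta_T}$, every moment of $X_{\Delta_T}^2$ is a polynomial in $\lambda$; a short computation gives a universal constant $C$ with $\E_\vartheta^{T,\Delta_T}[W_{T,1}^4]\le C(\lambda+\lambda^4)$, the $\lambda$-term carrying the contribution of $\{N_{\Delta_T}=1\}$ as $\lambda\to0$ and the $\lambda^4$-term dominating as $\lambda\to\infty$, while $\Var_\vartheta^{T,\Delta_T}(X_{\Delta_T}^2)^2=(\lambda+2\lambda^2)^2\ge\lambda^2+\lambda^4$. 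The Lyapunov ratio is therefore controlled by
\begin{equation*}
\frac{\E_\vartheta^{T,\Delta_T}[W_{T,1}^4]}{n_T\,\Var_\vartheta^{T,\Delta_T}(X_{\Delta_T}^2)^2}
\le\frac{C(\lambda+\lambda^4)}{n_T(\lambda^2+\lambda^4)}
\le\frac{C}{n_T\lambda}+\frac{C}{n_T},
\end{equation*}
and both terms vanish, since $n_T=\lfloor T\Delta_T^{-1}\rfloor\to\infty$ and $n_T\lambda\sim\vartheta T\to\infty$ whenever $T/\Delta_T\to\infty$.

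The argument is light, and the only point demanding care is \emph{uniformity across the three regimes}: nothing may be assumed about $\Delta_T$ beyond $T/\Delta_T\to\infty$, so a single chain of estimates must simultaneously cover $\Delta_T\to0$, $\Delta_T$ bounded away from $0$ and $\infty$, and $\Delta_T\to\infty$. This is precisely why I would phrase the fourth-moment bound and the variance lower bound in terms of $\lambda=\vartheta\Delta_T$ and track only the two quantities $n_T$ and $n_T\lambda$. I expect this bookkeeping — together with the elementary but slightly fiddly computation of $\E_\vartheta^{T,\Delta_T}[W_{T,1}^4]$ via conditioning on $N_{\Delta_T}$ — to be the main (and modest) obstacle; the rest is standard triangular-array CLT machinery plus Slutsky.
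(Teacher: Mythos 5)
Your proof takes essentially the same route as the paper: form the i.i.d.\ triangular array of centred squared increments $\big(X_{i\Delta_T}-X_{(i-1)\Delta_T}\big)^2-\vartheta\Delta_T$, with per-summand variance $\vartheta\Delta_T(1+2\vartheta\Delta_T)$, prove asymptotic normality by a triangular-array CLT, and absorb the deterministic bias and the harmless factor $T^{-1}\Delta_T\lfloor T\Delta_T^{-1}\rfloor\to1$ by Slutsky. The only difference is that you verify the Lyapunov fourth-moment criterion explicitly and keep it uniform across all three regimes via the single parameter $\lambda=\vartheta\Delta_T$, whereas the paper simply asserts the Lindeberg condition without further detail — so your argument is, if anything, slightly more complete on that point.
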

On a microscopic scale $\Delta_T \rightarrow 0$, we have
$$\frac{I_{T,\infty}(\vartheta)^{-1}}{I_{T,0}(\vartheta)^{-1}} \rightarrow 0\;\;\text{as}\;\;T \rightarrow \infty.$$
On a macroscopic scale $\Delta_T\rightarrow \infty$ with $T/\Delta_T \rightarrow \infty$, we have on the contrary
$$\frac{I_{T,0}(\vartheta)^{-1}}{I_{T,\infty}(\vartheta)^{-1}} \rightarrow 0\;\;\;\text{as}\;\;T \rightarrow \infty.$$ As a consequence, we readily see
that  $\widehat\vartheta_{T,\Delta_T}^{QV}$ is asymptotically normal and that its asymptotic variance is equivalent to $I_{T,0}(\vartheta)^{-1}$ on a microscopic scale and to $I_{T,\infty}(\vartheta)^{-1}$ on  a macroscopic scale. At a heuristical level, this phenomenon can be explained directly by the form of the empirical quadratic variation estimator, as we already did in Section \ref{introduction}. At intermediate scales however, this is no longer true.
\begin{theorem}[Loss of efficiency in the intermediate regime] \label{cas intermediaire}
Assume that
\begin{equation} \label{restriction borne inf}
0 < \liminf \Delta_T \leq \limsup \Delta_T \leq \frac{1}{4\vartheta}.
\end{equation}
Then
$$\liminf_{T\rightarrow \infty}\frac{I_{T,0}(\vartheta)^{-1}+I_{T,\infty}(\vartheta)^{-1}}{I_{T,\Delta_T}(\vartheta)^{-1}} >1,$$
where $I_{T,\Delta_T}(\vartheta)$ is defined in Theorem \ref{structure statistique inter}.
\end{theorem}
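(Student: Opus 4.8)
The plan is to reduce the ratio in the statement to an explicit function of $\vartheta\Delta_T$ and to bound it away from $1$ using the probabilistic content of the information of Theorem~\ref{structure statistique inter}. By the formulas of Theorems~\ref{structure statistique micro} and~\ref{structure statistique macro},
$$
I_{T,0}(\vartheta)^{-1}+I_{T,\infty}(\vartheta)^{-1}=\frac{\vartheta}{T}+\frac{2\vartheta^{2}\Delta_T}{T}=\frac{\vartheta}{T}\,(1+2\vartheta\Delta_T),
$$
so everything hinges on a sharp enough lower bound for $I_{T,\Delta_T}(\vartheta)$. Starting from $f_\Delta(\vartheta,k)=e^{-\vartheta\Delta}{\mathcal I}_{|k|}(\vartheta\Delta)$ and the recurrence $x\,{\mathcal I}_\nu'(x)=\nu\,{\mathcal I}_\nu(x)+x\,{\mathcal I}_{\nu+1}(x)$, a direct term-by-term computation gives, for the (unobserved) number of jumps $N_\Delta$ of $(X_t)$ on $[0,\Delta]$,
$$
\E_\vartheta[N_\Delta\mid X_\Delta=k]=\vartheta\Delta\,\frac{{\mathcal I}_{|k|}'(\vartheta\Delta)}{{\mathcal I}_{|k|}(\vartheta\Delta)}=|k|+\vartheta\Delta\,h_\Delta(\vartheta,k),
$$
so that the integrand of Theorem~\ref{structure statistique inter} equals $h_\Delta(\vartheta,X_\Delta)+(\vartheta\Delta)^{-1}|X_\Delta|-1=(\vartheta\Delta)^{-1}\bigl(\E_\vartheta[N_\Delta\mid X_\Delta]-\vartheta\Delta\bigr)$, a centred variable (its $\E_\vartheta$-mean is $(\vartheta\Delta)^{-1}(\E_\vartheta[N_\Delta]-\vartheta\Delta)=0$). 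Hence, by Theorem~\ref{structure statistique inter} --- applied along subsequences on which $\Delta_T$ converges, which is all we need since $0<\liminf_T\Delta_T\le\limsup_T\Delta_T<\infty$ ---
$$
I_{T,\Delta_T}(\vartheta)\ \sim\ T\Delta_T\,(\vartheta\Delta_T)^{-2}\,\Var_\vartheta\!\bigl(\E_\vartheta[N_{\Delta_T}\mid X_{\Delta_T}]\bigr)=\frac{T}{\vartheta^{2}\Delta_T}\,\Var_\vartheta\!\bigl(\E_\vartheta[N_{\Delta_T}\mid X_{\Delta_T}]\bigr).
$$

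The decisive estimate is then one line. A walk of $N_\Delta$ unit steps stays within distance $N_\Delta$ of its origin, hence $N_\Delta\ge|X_\Delta|$ pointwise, so $\E_\vartheta[N_\Delta\mid X_\Delta]\ge|X_\Delta|$; combined with $\E_\vartheta[X_\Delta^{2}]=\Var_\vartheta(X_\Delta)=\vartheta\Delta=\E_\vartheta[N_\Delta]$ this yields
$$
\Var_\vartheta\!\bigl(\E_\vartheta[N_\Delta\mid X_\Delta]\bigr)=\E_\vartheta\!\bigl[\E_\vartheta[N_\Delta\mid X_\Delta]^{2}\bigr]-(\vartheta\Delta)^{2}\ \ge\ \E_\vartheta[X_\Delta^{2}]-(\vartheta\Delta)^{2}=\vartheta\Delta-(\vartheta\Delta)^{2}.
$$
Therefore $I_{T,\Delta_T}(\vartheta)\ge\tfrac{T}{\vartheta}(1-\vartheta\Delta_T)(1+o(1))$, and since $\vartheta\Delta_T\le\frac14<1$ under~\eqref{restriction borne inf} this may be inverted.

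Putting the two bounds together,
$$
\frac{I_{T,0}(\vartheta)^{-1}+I_{T,\infty}(\vartheta)^{-1}}{I_{T,\Delta_T}(\vartheta)^{-1}}\ \ge\ (1+2\vartheta\Delta_T)(1-\vartheta\Delta_T)(1+o(1))=\bigl(1+\vartheta\Delta_T(1-2\vartheta\Delta_T)\bigr)(1+o(1)).
$$
Under~\eqref{restriction borne inf} the value $t=\vartheta\Delta_T$ lies, for $T$ large, in $(0,\tfrac14]$, on which $g(t):=1+t-2t^{2}$ is non-decreasing; hence the $\liminf$ of the left-hand side is at least $g\bigl(\vartheta\liminf_T\Delta_T\bigr)=1+\vartheta\liminf_T\Delta_T\bigl(1-2\vartheta\liminf_T\Delta_T\bigr)$, which is strictly $>1$ because $0<\vartheta\liminf_T\Delta_T\le\frac14$. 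This is the assertion.

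The step I expect to cost the real effort is the reformulation of $I_{T,\Delta_T}(\vartheta)$ as $\vartheta^{-2}\Var_\vartheta(\E_\vartheta[N_{\Delta_T}\mid X_{\Delta_T}])$ per increment: one must verify that the Bessel-function integrand of Theorem~\ref{structure statistique inter} really is the centred conditional mean $\tfrac1{\vartheta\Delta}(\E_\vartheta[N_\Delta\mid X_\Delta]-\vartheta\Delta)$ --- for which the recurrences for ${\mathcal I}_\nu$ together with the elementary identities $\sum_{k\ge1}{\mathcal I}_k(x)=\tfrac12(e^{x}-{\mathcal I}_0(x))$ and $\sum_{k\ge1}k\,{\mathcal I}_k(x)=\tfrac{x}{2}({\mathcal I}_0(x)+{\mathcal I}_1(x))$ suffice --- and justify exchanging $\partial_\vartheta$ with the conditional expectation (legitimate since $X_\Delta$ has exponential moments and $f_\Delta(\vartheta,\cdot)$ depends analytically on $\vartheta$). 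Everything downstream is elementary; note incidentally that the argument uses only $\limsup_T\vartheta\Delta_T<\tfrac12$, so the constant $\tfrac1{4\vartheta}$ in~\eqref{restriction borne inf} is comfortable --- it is precisely what makes $g$ monotone, and hence the conclusion clean.
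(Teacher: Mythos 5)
Your argument is correct, and it departs from the paper's proof in an interesting way. The paper starts from the same expansion of the per-increment Fisher information (equation \eqref{technical Fisher}), packages the ratio as an explicit univariate function $\mathcal{M}(x)=\big(\vartheta\Delta\E[h_\Delta^2]+2\E[|X_\Delta|h_\Delta]+1-\vartheta\Delta\big)(1+2\vartheta\Delta)$ of $x=\vartheta\Delta$, differentiates it, and shows $\partial_x\mathcal M(x)\ge 1-4x>0$ on $(0,\tfrac14]$ using the positivity of $\partial_\vartheta h_\Delta$ (a Bessel-function inequality due to Baricz) together with $h_\Delta\in[0,1]$; since $\mathcal M(x)\to1$ as $x\to0$, monotonicity gives $\mathcal M>1$ on $(0,\tfrac14]$. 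You instead identify the score as $\vartheta^{-1}\big(\E_\vartheta[N_\Delta\mid X_\Delta]-\vartheta\Delta\big)$ --- a centred conditional expectation of the (unobserved) jump count --- so that $\mathfrak{I}_\Delta(\vartheta)=\vartheta^{-2}\Var_\vartheta\big(\E_\vartheta[N_\Delta\mid X_\Delta]\big)$, and then bound this variance from below via the deterministic inequality $N_\Delta\ge|X_\Delta|$ and $\E_\vartheta[X_\Delta^2]=\vartheta\Delta$. This avoids both the differentiation of $\mathcal M$ and the Baricz reference, gives the information a transparent probabilistic interpretation (how well does $X_\Delta$ predict $N_\Delta$), and in fact proves the conclusion for $\limsup\vartheta\Delta_T<\tfrac12$, a strictly wider range than the paper's $\tfrac14$ --- you only need $(1-t)(1+2t)>1$, not monotonicity of $g$, so the remark in your last paragraph about $\tfrac14$ being what makes the argument work is overly modest. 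The tradeoff is that your argument only yields a lower bound for the deficiency ratio, whereas the paper's $\mathcal M(x)$ is the exact value (which they then use for the numerical curve in Figure~1); also worth noting, the identity $\E_\vartheta[N_\Delta\mid X_\Delta=k]=\vartheta\Delta\,\mathcal I_{|k|}'(\vartheta\Delta)/\mathcal I_{|k|}(\vartheta\Delta)$ follows directly from termwise differentiation of $\mathcal I_{|k|}(x)=\sum_m\phi_m(k)x^m/m!$, so the summation identities for $\sum_k\mathcal I_k$ you list are not actually needed. One small cosmetic point: the displayed formula in Theorem~\ref{structure statistique inter} carries a sign typo ($-(\vartheta\Delta_\infty)^{-1}|X_{\Delta_\infty}|$ should be $+$, as in the score computation \eqref{score centre}); you have silently used the correct sign.
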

\begin{remark}
For technical reasons, we are unable to prove that Theorem \ref{cas intermediaire} remains valid beyond the restriction $\limsup \Delta_T \leq 1/(4\vartheta)$. Numerical simulations suggest however that Theorem \ref{cas intermediaire} is valid whenever $\lim\sup\Delta_T<\infty$, see Figure \ref{Fig Ratio}.
\end{remark}
Let us denote by
$${\mathcal R}_T\big(\widehat \vartheta_{T, \Delta_T}^{QV},\vartheta\big) = \E_{\vartheta}^{T,\Delta_T}\big[\big(\widehat \vartheta_{T,\Delta_T}^{QV}-\vartheta\big)^2\big]$$ the squared error loss of the quadratic variation estimator. By Theorems \ref{structure statistique inter}, \ref{structure statistique micro} and \ref{structure statistique macro}, the family $\big({\mathcal E}^{T,\Delta_T}\big)_{T>0}$ is regular in all regimes and we may apply the classical minimax lower bound of Hajek, see for instance Theorem 12.1 in \cite{IH}: we have, for any
$\vartheta_0 \in \Theta$ and $\delta >0$ such that $[\vartheta_0-\delta, \vartheta_0+\delta] \subset \Theta$
\begin{equation} \label{borneinf}
\liminf_{T \rightarrow \infty}\sup_{|\vartheta - \vartheta_0| \leq \delta} I_{T,\Delta_T}(\vartheta){\mathcal R}_T\big(\widehat \vartheta_{T,\Delta_T}^{QV},\vartheta\big) \geq 1.
\end{equation}
On the one hand, Theorem \ref{estimateurs} suggests\footnote{This is actually true as the uniform integrability of $\widehat \vartheta_{T,\Delta_T}^{QV}$ under $\PP_\vartheta^{T,\Delta}$, locally uniformly in $\vartheta$, can easily be obtained. We leave the details to the reader.} that the lower bound \eqref{borneinf} can be achieved in microscopic and macroscopic regimes. On the other hand, Theorem \ref{cas intermediaire} shows that inequality \eqref{borneinf} is strict in the intermediate case, whenever the restriction \eqref{restriction borne inf} is satisfied, thus revealing a loss of efficiency in this sense.
Define
$$
\varphi(\vartheta, \Delta) = \E_\vartheta^{T,\Delta}\big[\big(h_{\Delta}(\vartheta, X_{\Delta})-(\vartheta\Delta)^{-1}|X_{\Delta}|-1\big)^2\big],$$
where $h_{\Delta}(\vartheta,k)$ is defined in Theorem \ref{structure statistique inter}. An inspection of the proof of Theorem \ref{cas intermediaire} shows that $\varphi(\vartheta, \Delta) = \psi(\vartheta \Delta)$, for some univariate function $\psi$, and that
$$\frac{I_{T,0}(\vartheta)^{-1}+I_{T,\infty}(\vartheta)^{-1}}{I_{T,\Delta}(\vartheta)^{-1}} = \psi(\vartheta \Delta)\big(2(\vartheta\Delta)^2+\vartheta \Delta\big).$$
The maximal loss of information is obtained for
$$\Delta^\star(\vartheta) \sim \vartheta^{-1}\mathrm{argmax}_{x >0} \psi(x)\big(2x^2+x\big)$$
as $T \rightarrow \infty$.
Numerical simulations show that the maximum loss of efficiency is close to $23\%$.
\begin{figure}
\begin{center}
\includegraphics[width=8.5cm,height=6cm]{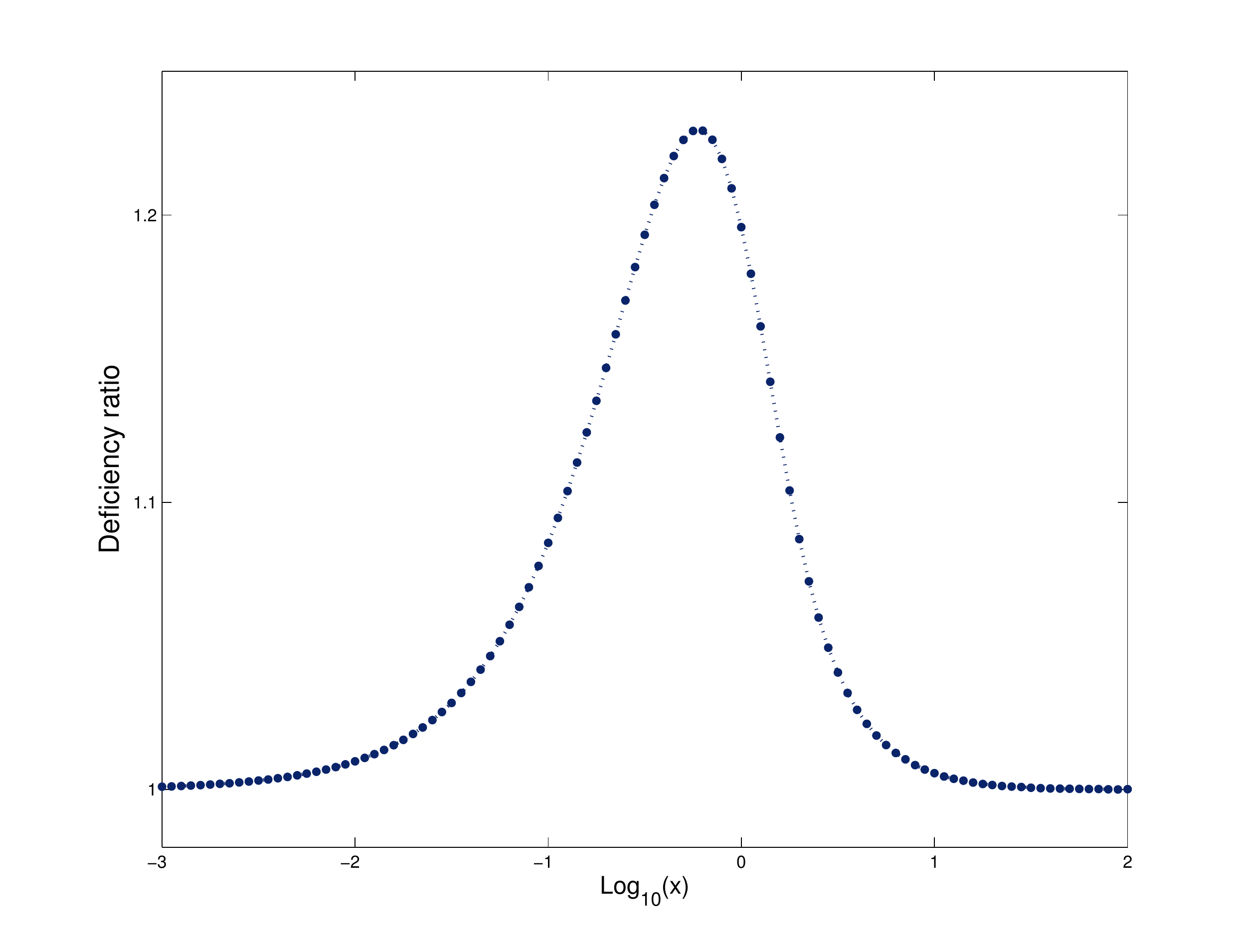}
\caption{\footnotesize{\textbf{Deficiency ratio through scales} ($x$-axis $x=\vartheta\Delta$, logarithmic scale). Ratio between the information $I_{T,\Delta_T}(\vartheta)$ and the inverse of the variance of $\widehat\vartheta^{QV}_{T,\Delta_T}$. The maximum is $1.2297$ and is attained at $x=0.600$.}}
\end{center}
\label{Fig Ratio}
\end{figure}
Since $\big({\mathcal E}^{T,\Delta}\big)_{T > 0}$ is regular for every $\Delta >0$, an asymptotically normal estimator with asymptotic variance equivalent to $I_{T,\Delta}(\vartheta)^{-1}$ is given by the maximum likelihood estimator. However due to the absence of a closed-form for the likelihood ratio that involves the intricate function $f_{\Delta}(\vartheta,k)$ defined in \eqref{defdensite1} (see also Section \ref{some estimates}), it seems easier to start from $\widehat\vartheta_{T,\Delta}^{QV}$ which is already rate-optimal by Theorem \ref{estimateurs} and correct it by a classical one-step iteration based on the Newton-Rhapson method, see for instance the textbook \cite{VdV} pp. 71--75. To that end, define
\begin{equation} \label{OneStep estimator}
\widehat\vartheta_{T,\Delta}^{OS}=\widehat\vartheta_T^{QV}-\frac{\sum_{i = 1}^{\lfloor T\Delta_T^{-1}\rfloor} \partial_{\vartheta }\log f_{\Delta}\big(\widehat\vartheta_{T,\Delta}^{QV},X_{i\Delta}-X_{(i-1)\Delta}\big)}{\sum_{i = 1}^{\lfloor T \Delta_T^{-1}\rfloor} \partial^2_{\vartheta}\log f_{\Delta}\big(\widehat\vartheta_{T,\Delta}^{QV}, X_{i\Delta}-X_{(i-1)\Delta}\big)}.
\end{equation}
\begin{theorem}\label{cas 1-Step}
In all three regimes (microscopic, intermediate and macroscopic), we have
$$I_{T,\Delta_T}^{1/2}\big(\widehat\vartheta_{T,\Delta_T}^{OS}-\vartheta\big) \longrightarrow {\mathcal N}(0,1)\;\;\;\text{in}\;\;\PP_\vartheta^{T,\Delta_T}\text{-distribution as}\;\;T\rightarrow \infty.$$
\end{theorem}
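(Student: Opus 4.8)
\emph{Proof strategy.} The estimator $\widehat\vartheta_{T,\Delta_T}^{OS}$ is the Newton--Raphson one-step correction of the preliminary estimator $\widehat\vartheta_{T,\Delta_T}^{QV}$, so the plan is to run the classical one-step scheme (see e.g.\ \cite{VdV}, pp.~71--75) uniformly across the three regimes. Write $Y_i=X_{i\Delta_T}-X_{(i-1)\Delta_T}$, $n_T=\lfloor T\Delta_T^{-1}\rfloor$ and $L_T(\vartheta)=\sum_{i=1}^{n_T}\log f_{\Delta_T}(\vartheta,Y_i)$; since $f_{\Delta_T}(\vartheta',k)=e^{-\vartheta'\Delta_T}{\mathcal I}_{|k|}(\vartheta'\Delta_T)>0$ for every $\vartheta'>0$, the map $\vartheta'\mapsto L_T(\vartheta')$ is smooth, and by definition $\widehat\vartheta_{T,\Delta_T}^{OS}=\widehat\vartheta_{T,\Delta_T}^{QV}-L_T'(\widehat\vartheta_{T,\Delta_T}^{QV})/L_T''(\widehat\vartheta_{T,\Delta_T}^{QV})$. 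Taylor expanding $L_T'$ at the true parameter $\vartheta$ and rearranging yields, for some $\bar\vartheta_T$ between $\widehat\vartheta_{T,\Delta_T}^{QV}$ and $\vartheta$,
\begin{align*}
I_{T,\Delta_T}^{1/2}\big(\widehat\vartheta_{T,\Delta_T}^{OS}-\vartheta\big)
&=\frac{I_{T,\Delta_T}^{-1/2}L_T'(\vartheta)}{-I_{T,\Delta_T}^{-1}L_T''(\widehat\vartheta_{T,\Delta_T}^{QV})}
+I_{T,\Delta_T}^{1/2}\big(\widehat\vartheta_{T,\Delta_T}^{QV}-\vartheta\big)\Big(1-\frac{L_T''(\vartheta)}{L_T''(\widehat\vartheta_{T,\Delta_T}^{QV})}\Big)\\
&\quad-\tfrac12\,I_{T,\Delta_T}^{-1/2}\Big(I_{T,\Delta_T}^{1/2}\big(\widehat\vartheta_{T,\Delta_T}^{QV}-\vartheta\big)\Big)^{2}\frac{L_T'''(\bar\vartheta_T)}{L_T''(\widehat\vartheta_{T,\Delta_T}^{QV})}.
\end{align*}
The conclusion will follow once one establishes, in each regime: (a) $I_{T,\Delta_T}(\vartheta)^{-1/2}L_T'(\vartheta)\to{\mathcal N}(0,1)$ in $\PP_\vartheta^{T,\Delta_T}$-distribution; (b) $-I_{T,\Delta_T}(\vartheta)^{-1}L_T''(\vartheta)\to 1$ in $\PP_\vartheta^{T,\Delta_T}$-probability; and (c) for every $C>0$, writing $\mathcal V_T=\{\vartheta':|\vartheta'-\vartheta|\le C\,I_{T,\Delta_T}(\vartheta)^{-1/2}\}$,
$$\sup_{\vartheta'\in\mathcal V_T}\big|I_{T,\Delta_T}^{-1}L_T''(\vartheta')-I_{T,\Delta_T}^{-1}L_T''(\vartheta)\big|\to 0\quad\text{and}\quad\sup_{\vartheta'\in\mathcal V_T}\big|I_{T,\Delta_T}^{-1}L_T'''(\vartheta')\big|=O_{\PP_\vartheta}(1).$$

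\emph{Reduction.} First I would check that $\widehat\vartheta_{T,\Delta_T}^{QV}$ is rate-optimal, i.e.\ $I_{T,\Delta_T}(\vartheta)^{1/2}\big(\widehat\vartheta_{T,\Delta_T}^{QV}-\vartheta\big)=O_{\PP_\vartheta}(1)$. By Theorems \ref{structure statistique micro}, \ref{structure statistique macro} and \ref{structure statistique inter} together with the identity $\varphi(\vartheta,\Delta)=\psi(\vartheta\Delta)$, one has $I_{T,0}(\vartheta)^{-1}+I_{T,\infty}(\vartheta)^{-1}=I_{T,\Delta_T}(\vartheta)^{-1}\,\psi(\vartheta\Delta_T)\big(2(\vartheta\Delta_T)^2+\vartheta\Delta_T\big)$, and the right-hand factor is a continuous positive function of $x=\vartheta\Delta_T\in(0,\infty)$ tending to $1$ both as $x\to0$ and as $x\to\infty$ (this is exactly the content of the limits $x\psi(x)\to1$ and $x^2\psi(x)\to\tfrac12$ extracted from Theorems \ref{structure statistique micro} and \ref{structure statistique macro}), hence bounded and bounded away from $0$. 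So Theorem \ref{estimateurs} gives $\widehat\vartheta_{T,\Delta_T}^{QV}-\vartheta=O_{\PP_\vartheta}\big(I_{T,\Delta_T}(\vartheta)^{-1/2}\big)$; since also $I_{T,\Delta_T}(\vartheta)\to\infty$ in all regimes, $\widehat\vartheta_{T,\Delta_T}^{QV}$ lands in $\mathcal V_T$ (for a suitable $C$) with probability tending to $1$, and $\mathcal V_T$ is eventually contained in the interior of $\Theta$, so the Taylor expansion above is legitimate. Granting (a)--(c): in the display the denominator $-I_{T,\Delta_T}^{-1}L_T''(\widehat\vartheta_{T,\Delta_T}^{QV})\to1$ by (b)--(c), so the first summand converges in law to ${\mathcal N}(0,1)$ by (a); the second equals $O_{\PP_\vartheta}(1)$ times $1-L_T''(\vartheta)/L_T''(\widehat\vartheta_{T,\Delta_T}^{QV})\to0$; and the third equals $o_{\PP_\vartheta}(1)$ times $O_{\PP_\vartheta}(1)$ — which proves the theorem.

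\emph{Verifying (a)--(c): intermediate and microscopic regimes.} These come from pointwise and local-uniform estimates on $\log f_{\Delta_T}(\cdot,k)$ and its first three $\vartheta$-derivatives, which — using ${\mathcal I}_\nu'(x)={\mathcal I}_{\nu+1}(x)+(\nu/x){\mathcal I}_\nu(x)$ — are rational expressions in the Bessel ratios $h_\Delta(\vartheta,k)={\mathcal I}_{|k|+1}(\vartheta\Delta)/{\mathcal I}_{|k|}(\vartheta\Delta)$ (and ${\mathcal I}_{|k|+2}/{\mathcal I}_{|k|}$, etc.) plus the linear term $|k|/\vartheta$, so that the required bounds are precisely the estimates already developed for Theorems \ref{structure statistique inter} and \ref{structure statistique micro}. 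In the intermediate regime the $Y_i$ are i.i.d.\ with a fixed smooth law and $I_{T,\Delta_T}(\vartheta)\sim n_T\,\Delta_\infty^2\varphi(\vartheta,\Delta_\infty)$, so (a)--(c) are the textbook regularity conditions for one-step estimation in a parametric i.i.d.\ model: the score identities $\E_\vartheta[\partial_\vartheta\log f_{\Delta_\infty}(\vartheta,Y_1)]=0$ and $\E_\vartheta[(\partial_\vartheta\log f_{\Delta_\infty})^2]=-\E_\vartheta[\partial_\vartheta^2\log f_{\Delta_\infty}]$ (from $\sum_k f_{\Delta_\infty}(\vartheta,k)=1$), the CLT and LLN for i.i.d.\ sums, and a dominated third-derivative bound $\E_\vartheta\big[\sup_{|\vartheta'-\vartheta|\le\eps}|\partial_{\vartheta'}^3\log f_{\Delta_\infty}(\vartheta',Y_1)|\big]<\infty$, all of which use only the Poisson-type decay of $f_{\Delta_\infty}(\vartheta,\cdot)$ against at-most-polynomial growth in $|k|$ of the log-derivatives; moreover (a) and (b) are already by-products of the Ibragimov--Has'minskii-type proof of Theorem \ref{structure statistique inter}. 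In the microscopic regime $\Delta_T\to0$ one reuses the small-$\Delta_T$ expansions of $f_{\Delta_T}$ and its log-derivatives from the proof of Theorem \ref{structure statistique micro}: to leading order $I_{T,0}(\vartheta)^{-1/2}L_T'(\vartheta)$ is a normalised centred count of the non-empty increments (tracking $N_T/\sqrt{\vartheta T}$) and hence asymptotically ${\mathcal N}(0,1)$, while $-I_{T,0}(\vartheta)^{-1}L_T''(\vartheta)\to1$, and (c) follows from the same expansions with remainders controlled by powers of $\Delta_T$.

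\emph{The main obstacle: the macroscopic regime.} Here the proof of Theorem \ref{structure statistique macro} did not go through a direct expansion of $f_{\Delta_T}$ but through Le Cam equivalence with the Gaussian scale experiment, so (a) and (b) are not available off the shelf and must be produced directly for $\widehat\vartheta_{T,\Delta_T}^{OS}$. I would push through uniform (saddle-point type) asymptotics of the modified Bessel functions for large argument and order, namely ${\mathcal I}_{|k|}(\vartheta\Delta_T)\sim(2\pi\vartheta\Delta_T)^{-1/2}e^{\vartheta\Delta_T}e^{-k^2/(2\vartheta\Delta_T)}$ on the bulk $|k|\lesssim(\vartheta\Delta_T\log n_T)^{1/2}$ plus a crude tail bound beyond it, but only for the first three $\vartheta$-derivatives of $\log f_{\Delta_T}$; these match the Gaussian scale score $-(2\vartheta)^{-1}+k^2/(2\vartheta^2\Delta_T)$ and its derivatives, yielding (a)--(c) with $I_{T,\infty}(\vartheta)=T\Delta_T^{-1}/(2\vartheta^2)$, at the cost of the same stringent rate condition $T/\Delta_T^{1+\frac14}=o((\log(T/\Delta_T))^{-\frac14})$ inherited from Theorem \ref{structure statistique macro}. (An alternative is to transport the limit theorems through the Markov kernels realising the Le Cam equivalence from the corresponding Gaussian one-step estimator, which is classically efficient; this route is less direct, since the one-step functional is not canonically preserved under equivalence.) I expect this bulk/tail analysis of the Bessel ratios and their $\vartheta$-derivatives — obtaining bounds uniform in $k\in\Z$ and in $\vartheta'\in\mathcal V_T$ with the correct dependence on $\Delta_T$ — to be the most technical part of the whole proof.
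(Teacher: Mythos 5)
Your proposal is essentially a reconstruction of the argument the paper compresses into one line: the cited Theorem 5.45 of van der Vaart is precisely your Taylor expansion of $L_T'$ about $\vartheta$, the reduction to conditions (a)--(c), and the observation that $\widehat\vartheta_{T,\Delta_T}^{QV}$ is a legitimate $I_{T,\Delta_T}^{1/2}$-consistent starting point. Your three-term decomposition is algebraically correct, and the check that the ratio $\psi(x)(2x^2+x)$ stays bounded away from $0$ and $\infty$ across regimes -- so that rate-optimality transfers from Theorem~\ref{estimateurs} -- is a genuine prerequisite that the paper leaves implicit.

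Where you rightly hesitate, and where the paper's one-liner skips the issue, is the macroscopic regime. Theorem 5.45 of van der Vaart is stated for a fixed i.i.d.\ parametric model; it covers the intermediate regime directly and, via the small-$\Delta_T$ expansion of Lemma~\ref{exp Bessel 0}, essentially the microscopic one as well. But for $\Delta_T\to\infty$ the increment law changes with $T$, and the paper's own proof of Theorem~\ref{structure statistique macro} deliberately bypassed pointwise Bessel asymptotics, proving LAN through total-variation closeness to the Gaussian scale experiment. That route does not identify the LAN central sequence with $I_{T,\Delta_T}^{-1/2}L_T'(\vartheta)$, nor does it control $L_T''$ and $L_T'''$ of the discrete log-density, so conditions (a)--(c) are not available off the shelf there. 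Your proposed fix -- uniform large-order/large-argument asymptotics of ${\mathcal I}_{|k|}(\vartheta\Delta_T)$ on the bulk together with a crude tail bound, or else transport through the Le Cam equivalence -- is the right plan, but it is left as a plan; of the two, the direct Bessel asymptotics is the cleaner route since, as you note yourself, the one-step functional is not canonically preserved under asymptotic equivalence. This is a real remaining gap, but be aware it is present in the paper's appeal to van der Vaart as well, not something your approach introduces.
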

\begin{proof} In essence the regularity of $f_{\Delta}\big(\widehat\vartheta_{T,\Delta}^{QV},X_{i\Delta}-X_{(i-1)\Delta}\big)$ enables to apply Theorem 5.45 of Van der Vaart \cite{VdV}.
\end{proof}
Theorem \ref{cas 1-Step} expresses the fact that $\widehat\vartheta_{T,\Delta_T}^{OS}$ automatically adapts to $I_{T,\Delta_T}$ and is therefore optimal across scales.

\begin{figure}
\begin{center}
 \includegraphics[width=8.5cm,height=6cm]{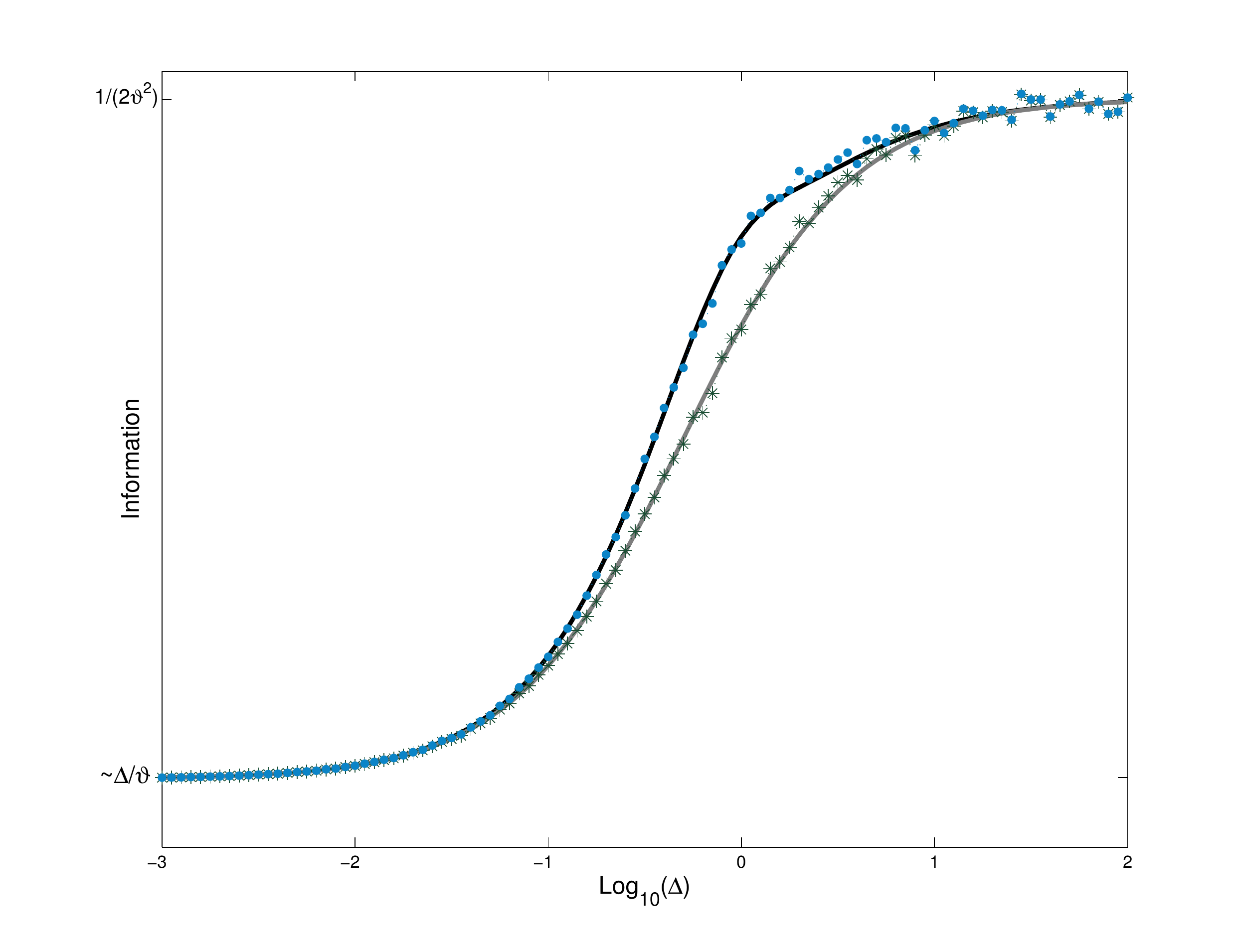}
 \caption{\label{pen} \footnotesize{\textbf{Information deficiency through scales ($\vartheta=1$, $x$-axis $\Delta$, logarithmic scale).} Information $I_{T,\Delta_T}$ (solid black). Inverse of the variance of $\widehat\vartheta^{QV}_{T,\Delta_T}$, theoretical (solid grey) and empirical (star green). The inverse of the empirical  variance of $\widehat\vartheta_{T,\Delta_T}^{OS}$ (dotted blue) is close to the optimal $I_{T,\Delta_T}$ on all scales. The empirical variances were computed using $10^4$ Monte-Carlo simulations from $T/\Delta = 10^5$ data, {\it i.e.} with the same number of data through scales.}}
 \end{center}
 \end{figure}
\section{Discussion} \label{cas NH}
The compound Poisson process $(X_t)$ with Bernoulli symmetric jumps defined in \eqref{first Poisson compose} is the simplest model of a continuous time symmetric random walk on a lattice that diffuses to a Brownian motion on a macroscopic scale. The intensity $\vartheta$ of the Poisson arrivals on a microscopic scale is transferred into the variance $\vartheta$ of the Brownian motion on a macroscopic scale:
\begin{equation} \label{homogeneous macro approx}
\Big(\frac{1}{\sqrt{T}}X_{tT},t \in [0,1]\Big) \longrightarrow \big(\sqrt{\vartheta} W_t, t \in [0,1]\big)
\end{equation}
in distribution as $T \rightarrow \infty$, where $(W_t)$ is a standard Brownian motion. The statistical inference program we have developed across time scales on the toy model given by $(X_t)$ can be useful in several applied fields. For instance, in financial econometrics, $(X_t)$ may be viewed as a toy model for a price process (last traded price, mid-price or bets bid/ask price) observed at the level of the order book, see {\it e.g.} \cite{Hautsch} or \cite{CTRW}. The parameter $\vartheta$ can be interpreted as a trading intensity on microscopic scales that transfers into a macroscopic volatility in the diffusion regimes. Our results convey the message that if a practitioner samples $(X_t)$ at high frequency at the same rate as price changes, which is customary in practice, then the realised volatility estimator $\widehat \vartheta_{T,\Delta_T}^{QV}$ is not efficient, and a modified estimator like $\widehat \vartheta_{T,\Delta_T}^{OS}$ should be used instead. However, this framework is a bit too simple and needs to be generalised in order to be more realistic in practice. Two directions can be explored in a relatively straightforward manner:
\begin{itemize}
\item[i)] The extension to a non-homogeneous intensity Poisson process.
\item[ii)] The extension to an arbitrary compound law on a discrete lattice.
\end{itemize}
\subsection*{Extension to the non-homogeneous case}
Theorems \ref{structure statistique inter}, \ref{structure statistique micro} and \ref{structure statistique macro} extend to the non-homogeneous case, when one allows the intensity of the jumps to depend on time. In this setting, the counting process $(N_t)$ defined in \eqref{first Poisson compose} is defined on $[0,T]$ and has intensity
$$\Lambda_T\big(t,\vartheta) = \int_0^{t} \lambda(\vartheta,\tfrac{s}{T})ds,\;\;\;\text{for}\;\;t \in [0,T]$$
where
$$\lambda: \vartheta \times [0,1] \rightarrow (0,\infty)$$
is the nonvanishing (integrable) intensity function, so that the process
$$\big(N_t-\Lambda_T(t,\vartheta), t \in [0,T]\big)$$ is a martingale. The homogenous case is recovered by setting $\lambda(\vartheta, t)=\vartheta$ for every $t\in [0,1]$. In this context, the macroscopic approximation \eqref{homogeneous macro approx} becomes
$$\Big(\frac{1}{\sqrt{T}}X_{tT},t \in [0,1]\Big) \longrightarrow \Big(\int_0^t\sqrt{\lambda(\vartheta,s)}\, dW_s, t \in [0,1]\Big)$$
in distribution as $T \rightarrow \infty$. We state -- without proof -- an extension of Theorems  \ref{structure statistique inter}, \ref{structure statistique micro} and \ref{structure statistique macro} for the associated family of experiments $\big({\mathcal E}^{T,\Delta_T}\big)_{T >0}$ across scales.

\begin{assumption} \label{hypothese}
 We have that $\vartheta \leadsto \lambda(\vartheta, t)$ is continuously differentiable for almost all $t\in [0,1]$ and moreover $\sup_{\vartheta \in \Theta, t \in [0,1]}\lambda(\vartheta, t) <\infty$.
\end{assumption}
\begin{theorem} \label{structure non-homogene}
We have Theorems \ref{structure statistique inter}, \ref{structure statistique micro} and \ref{structure statistique macro} with the following generalisation
\begin{enumerate}
\item In the microscopic case $\Delta_T \rightarrow 0$,
$$I_{T,0}(\vartheta) = T \int_0^1 \big(\partial_\vartheta \log \lambda(\vartheta, s)\big)^2 \lambda(\vartheta, s)ds.$$
\item In the intermediate regime $\Delta_T \rightarrow \Delta_\infty \in (0,\infty)$,
$$
I_T(\vartheta) = T\Delta_{\infty}\int_0^1\big(\partial_{\vartheta}\log \lambda(\vartheta,s)\big)^2 \lambda(\vartheta,s)^2 H(\vartheta, s)ds,$$
with
$$H(\vartheta, s) = \E_{\vartheta}^{T,\Delta_\infty}\big[\big(h_{\Delta_\infty}(\vartheta,s,X_{\Delta_{\infty}})+\big(\lambda(\vartheta,s)\Delta_{\infty}\big)^{-1}|X_{\Delta_{\infty}}| -1\big)^2\big],$$
and
$$h_{\Delta_\infty}(\vartheta,s,k)=\frac{{\mathcal I}_{|k|+1}(\lambda(\vartheta,s)\Delta_{\infty})}{{\mathcal I}_{|k|}(\lambda(\vartheta,s)\Delta_{\infty})}.$$
\item In the macroscopic case $\Delta_T \rightarrow \infty$ with $T/\Delta_T \rightarrow \infty$ and $T/\Delta_T^{1+\frac{1}{4}}=o((\log( T/\Delta_T))^{-\frac{1}{4}})$,
$$I_T(\vartheta) = \frac{T\Delta_T^{-1}}{2}\int_0^1\big(\partial_{\vartheta}\log\lambda(\vartheta,s)\big)^2ds.$$
\end{enumerate}
\end{theorem}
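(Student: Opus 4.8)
\noindent We do not carry out the details, but let us indicate the structure of the argument. The plan is to reduce the non-homogeneous model to the homogeneous one \emph{blockwise}. Since $(X_t)$ has independent increments, the increments $\xi_i:=X_{i\Delta_T}-X_{(i-1)\Delta_T}$, $i=1,\dots,n_T$ with $n_T:=\lfloor T\Delta_T^{-1}\rfloor$, are independent under $\PP_\vartheta^{T,\Delta_T}$, and the number of jumps in $[(i-1)\Delta_T,i\Delta_T]$ is Poisson with parameter
$$\mu_i(\vartheta):=\Lambda_T(i\Delta_T,\vartheta)-\Lambda_T((i-1)\Delta_T,\vartheta)=\int_{(i-1)\Delta_T}^{i\Delta_T}\lambda(\vartheta,s/T)\,ds.$$
By thinning, $\xi_i$ is a difference of two independent $\mathrm{Poisson}(\mu_i(\vartheta)/2)$ variables, hence has the \emph{exact} law $g(\mu_i(\vartheta),\cdot)$, where $g(\mu,k)=e^{-\mu}\mathcal I_{|k|}(\mu)$ is the law of $X_\Delta$ in the homogeneous model at $\vartheta\Delta=\mu$. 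This is the only structural input; everything downstream is a blockwise rerun of the proofs of Theorems~\ref{structure statistique inter}, \ref{structure statistique micro} and \ref{structure statistique macro}, with the chain-rule factor $\partial_\vartheta\mu_i(\vartheta)$ attached. I would also record the elementary identity $\mu_i(\vartheta)=\Delta_T\cdot\big(\tfrac{T}{\Delta_T}\int_{(i-1)\Delta_T/T}^{\,i\Delta_T/T}\lambda(\vartheta,v)\,dv\big)$, which exhibits $\mu_i(\vartheta)$ (and likewise $\partial_\vartheta\mu_i(\vartheta)$) as $\Delta_T$ times a local average of $\lambda(\vartheta,\cdot)$ (resp.\ $\partial_\vartheta\lambda(\vartheta,\cdot)$) over a window of length $\Delta_T/T\to0$.

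Next I would write the log-likelihood ratio as $\sum_{i=1}^{n_T}\big(\log g(\mu_i(\vartheta+h),\xi_i)-\log g(\mu_i(\vartheta),\xi_i)\big)$, Taylor expand to second order in $h$ exactly as in the homogeneous proofs, and identify the centred score together with its variance
$$I_{T,\Delta_T}(\vartheta)=\sum_{i=1}^{n_T}\big(\partial_\vartheta\mu_i(\vartheta)\big)^2\,\phi\big(\mu_i(\vartheta)\big),\qquad\phi(\mu):=\E_\mu\big[\big(\partial_\mu\log g(\mu,X)\big)^2\big].$$
The Bessel recurrence $\mathcal I_\nu'(x)=\mathcal I_{\nu+1}(x)+\tfrac{\nu}{x}\mathcal I_\nu(x)$ gives $\partial_\mu\log g(\mu,k)=h(\mu,k)+\mu^{-1}|k|-1$ with $h(\mu,k)=\mathcal I_{|k|+1}(\mu)/\mathcal I_{|k|}(\mu)$. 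Substituting the local-average approximations and recognising a Riemann sum over $u=i\Delta_T/T\in[0,1]$ with mesh $\Delta_T/T$, one obtains $I_{T,\Delta_T}(\vartheta)\sim T\Delta_T\int_0^1(\partial_\vartheta\lambda(\vartheta,u))^2\,\phi(\Delta_T\lambda(\vartheta,u))\,du$. Specialising $\phi$ yields the three cases: $\phi(\mu)\sim\mu^{-1}$ as $\mu\to0$ (from the dominant atoms of $g(\mu,\cdot)$ at $0,\pm1$) gives case~1; for $\Delta_T\to\Delta_\infty$, $\phi(\mu)$ is literally the expectation defining $H(\vartheta,u)$ and gives case~2; and for $\Delta_T\to\infty$ the Gaussian approximation $g(\mu,\cdot)\approx\mathcal N(0,\mu)$ gives $\phi(\mu)\sim\tfrac1{2\mu^2}$ and hence case~3.

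The two remaining LAN ingredients are routine adaptations: negligibility of the Taylor remainder $r_T$ follows from the blockwise estimates already present in the homogeneous proofs, applied uniformly over blocks — here one uses $\inf_{\vartheta,s}\lambda(\vartheta,s)>0$ so that $\mu_i(\vartheta)\gtrsim\Delta_T$, keeping the microscopic and macroscopic expansions uniform — and asymptotic normality of the (now merely independent, not i.i.d.)\ score follows from a Lyapunov central limit theorem, the individual summands being uniformly small while the total information is of the claimed order, so the Lyapunov ratio vanishes. Convergence of the Riemann sums uses only Assumption~\ref{hypothese} (continuity a.e.\ and boundedness of $\lambda$ and $\partial_\vartheta\lambda$) together with Lebesgue differentiation. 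I expect the genuine obstacle to be the macroscopic regime: one must transfer the argument of Section~\ref{preuve macro} — which establishes Le~Cam equivalence with the homoscedastic Gaussian shift through a total-variation bound on the increment law, valid only under $T/\Delta_T^{1+\frac14}=o((\log(T/\Delta_T))^{-\frac14})$ — to the \emph{heteroscedastic} Gaussian experiment $(\mathcal N(0,\mu_i(\vartheta)))_{1\le i\le n_T}$, with variances $\mu_i(\vartheta)$ ranging in $[c\Delta_T,C\Delta_T]$, controlling the total-variation error accumulated over the $n_T=T/\Delta_T$ blocks, and then checking that the Fisher information of that heteroscedastic experiment is the claimed Riemann integral $\tfrac{T\Delta_T^{-1}}{2}\int_0^1(\partial_\vartheta\log\lambda(\vartheta,u))^2\,du$. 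Everything else adapts the homogeneous arguments essentially verbatim.
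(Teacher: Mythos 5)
Your proposal is essentially the same argument the paper sketches after the theorem statement: the paper itself gives no proof, but it indicates precisely the same blockwise reduction $\mu_i(\vartheta)=\int_{(i-1)\Delta_T}^{i\Delta_T}\lambda(\vartheta,s/T)\,ds=\Delta_T\lambda\big(\vartheta,\tfrac{(i-1)\Delta_T}{T}\big)+\Delta_T r_T$, the transfer of the i.i.d.\ LAN arguments to independent non-identically-distributed increments, and the assembly of the local Fisher informations into a Riemann integral via the regularity of $\lambda$. Your additional detail (Skellam law per block, the chain-rule form $\sum_i(\partial_\vartheta\mu_i)^2\phi(\mu_i)$ with the three $\phi$-asymptotics, the Lyapunov CLT for the score, and the flagged difficulty of adapting the Le Cam total-variation argument to the heteroscedastic Gaussian experiment in the macroscopic case) is a faithful expansion of that same plan.
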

The proof of Theorem \ref{structure non-homogene} relies on the approximation
$$\int_{(i-1)\Delta_T}^{i\Delta_T}\lambda(\vartheta,s)ds=\Delta_T \lambda\Big(\vartheta,\frac{i-1}{T\Delta_T^{-1}}\Big)+\Delta_T r_T,$$
for $i=1,\ldots, \lfloor T\Delta_T^{-1}\rfloor$, where $r_T\rightarrow 0$ as $T \rightarrow \infty$ in all three regimes. Assumption \ref{hypothese} ensures that the convergence of the remainder is uniform in $i$ and $\vartheta$. This reduction enables us to transfer the problem of proving Theorems \ref{structure statistique inter}, \ref{structure statistique micro} and \ref{structure statistique macro} when substituting independent identically distributed random variables by independent non-equally distributed ones. This is not essentially more difficult, and the regularity of $\lambda$ enables us to piece together the local information given by each increment $X_{i\Delta_T}-X_{(i-1)\Delta_T}$ in order to obtain the formulae of Theorem \ref{structure non-homogene}.

An analogous program as in Section \ref{distortion} for the distortion of information could presumably be carried over, with appropriate modifications. For instance, one can show that
$$\widehat \vartheta_{T,\Delta_T}^{QV} = \sum_{i = 1}^{\lfloor T\Delta_T^{-1}\rfloor}\big(X_{i\Delta_T}-X_{(i-1)\Delta_T}\big)^2 \longrightarrow \int_0^1 \lambda(\vartheta,s)ds\;\;\;\text{as}\;\;T\rightarrow \infty$$
in $\PP_\vartheta^{T,\Delta_T}$-probability, in all three regimes. Then, in order to estimate $\vartheta$ efficiently, one should rather consider a contrast estimator that maximises
$$\widetilde \vartheta \leadsto U_{T,\Delta_T}(\widetilde \vartheta)=\sum_{i = 1}^{\lfloor T\Delta_T^{-1}\rfloor} g_{\Delta_T}\big(\lambda(\widetilde \vartheta, i\Delta_T), X_{i\Delta_T} - X_{(i-1)\Delta_T}\big)$$
for a suitable function $g_{T,\Delta_T}$, and make further assumptions on existence of a unique maximum for the limit -- whenever it exists -- of $U_{T,\Delta_T}$ under $\PP_\vartheta^{T,\Delta_T}$ as $T \rightarrow \infty$.
We do not pursue this here.
\subsection*{Extension to more general compound laws}
The situation is a bit more delicate when one tries to generalise Theorems \ref{structure statistique inter}, \ref{structure statistique micro} and \ref{structure statistique macro} to an arbitrary compound law $(\zeta(\vartheta, k), k \in \Z)$, for every $\vartheta \in \Theta$, with
$$0 \leq \zeta(\vartheta,k) \leq 1,\;\;\text{for}\;\;k \in \Z\;\;\text{and}\;\;\sum_{k \in \Z}\zeta(\vartheta, k)=1,$$
(and $\zeta(\vartheta, 0)=0$ for obvious identifiability conditions). We then observe a process $(X_t)$ of the form \eqref{first Poisson compose}, except that the jumps $(\varepsilon_i)$ are now distributed according to
$$\PP(\varepsilon_i=k)=\zeta(\vartheta,k),\;\;k\in\Z.$$
In order to keep up with the preceding case, we normalise the compound law, imposing
\begin{equation} \label{normalisation}
\sum_{k \in \Z}k\, \zeta(\vartheta, k)=0\;\;\;\text{and}\;\;\sum_{k \in \Z}k^2 \zeta(k,\vartheta)=1.
\end{equation}
First, in the microscopic case, we approximately observe over the period $[0,T]$ a random number of jumps, namely $N_T$ which is of order $\vartheta T$. Second, conditionally on $N_T$, the size of the jumps form a sequence of independent and identically distributed random variables with law $\zeta(\vartheta,k)$. On the other side, in the macroscopic limit, the effect of the size of the jumps is only tracked through their second moment, which is normalised to $1$ by \eqref{normalisation}. Therefore it gives no additional information about $\vartheta$. The situation is rather different from the case of symmetric Bernoulli jumps: here, the extraneous information about $\vartheta$ lies in the effect of the jumps, which are recovered in the microscopic regime and lost in the macroscopic one. There is however one way to reconcile with our initial setting, assuming that the compound law $\zeta(k)$ does not depend on $\vartheta$ and is known for simplicity.
Then, for $k \in \Z$, we have
$$f_{\Delta}(\vartheta, k) = \PP_\vartheta^{T,\Delta}\big(X_{i\Delta}-X_{(i-1)\Delta}=k\big) = \sum_{m \in \Z}\zeta^{\star m}(k)\frac{e^{-\vartheta \Delta}}{m!} (\vartheta \Delta)^m,$$
where $\zeta^{\star m}(k)$ is the probability that a random walk with law $\zeta(k)$ started at $0$ reaches $k$ in $m$ steps exactly. Therefore
\begin{equation}\label{gen law}
f_{\Delta}(\vartheta, k) = e^{-\vartheta \Delta} {\mathcal G}_k(\vartheta \Delta),\;\;\text{with}\;\;\;{\mathcal G}_k(x) =  \sum_{m \in \Z}\zeta^{\star m}(k) \frac{x^m}{m!}.
\end{equation}
In the symmetric Bernoulli case, we have ${\mathcal G}_k(x)={\mathcal I}_{|k|}(x)$, where ${\mathcal I}_{\nu}(x)$ is the modified Bessel function of the first kind. Anticipating the proof of Theorems \ref{structure statistique inter}, \ref{structure statistique micro} and \ref{structure statistique macro}, analogous results could presumably be obtained for an arbitrary compound law $\zeta(k)$ satisfying \eqref{normalisation}, provided accurate asymptotic expansions of ${\mathcal G}_k(x)$ are available in the viscinity of $0$ and $\infty$. The same subsequent results about the distortion of information that are developed in Section \ref{distortion} would presumably follow, with the same estimators $\widehat \vartheta_{T,\Delta_T}^{QV}$ and $\widehat \vartheta_{T,\Delta_T}^{OS}$, and the appropriate changes for $f_\Delta(\vartheta, k)$ in \eqref{OneStep estimator}.

\subsection*{Relation to other works}
Concerning the estimation of the law of the jumps, say $\zeta$, we have an inverse problem. One tries to recover $\zeta$ from the observations of a compound Poisson process, the link between $\zeta$ and the law of the process being given by \eqref{gen law}. In the setting of positive compound laws, Buchmann and Gr\"ubel \cite{Buchmann03, Buchmann04} succeed to invert that relation and give an estimator of $\zeta$ in the discrete and continuous case. That method which consists in inverting (\ref{gen law}) is called decompounding. It was generalised by Bogsted and Pitts \cite{Pitts10} to renewal reward processes when the law of the holding times is known, inrestriction to the case of having positive jumps only.

The compound Poisson process is a pure jump L\'evy process that can be studied accordingly. Using the L\'evy-Khintchine formula, it is possible to estimate nonparametrically its L\'evy measure which is given by the product $\vartheta\times\zeta$ in that case. This strategy is exploited by van Es {\it et al.} \cite{VAN-ES07} for a known intensity. This estimation procedure does not restrict to compound Poisson processes and it includes the case of pure jump L\'evy processes in general. Nonparametric estimation of the L\'evy measure from high frequency data (that corresponds to our microscopic case $\Delta_T\rightarrow 0)$ is thoroughly studied in  Comte and Genon-Catalot \cite{Comte09} as well as in the intermediate regime (with $\Delta_T=\Delta_\infty$ fixed) in \cite{Comte10}. In that latter case, we also have the results of Neumann and Rei{\ss} \cite{Reiss}.
\section{Proofs} \label{preuves}
\subsection{Preparation}
\subsubsection{Some estimates for $f_\Delta(\vartheta,k)$} \label{some estimates}
We have, for $k \in \Z$:
\begin{align*}
f_{\Delta}(\vartheta,k ) & =  \PP_\vartheta^{T,\Delta}\big(X_{\Delta}=k\big) = \sum_{m \geq 0}\phi_m(k)\frac{e^{-\vartheta \Delta}}{m!}(\vartheta \Delta)^m
\end{align*}
where $\phi_m(k)$ is the probability that a symmetric random walk in $\Z$ started from $0$ has value $k$ after $m$ steps exactly:
$$\phi_m(k)=
\left\{
\begin{array}{ll}
0 & \text{if}\; |k| > m\;\text{or}\;|k|-m\;\text{is odd} \\
2^{-m}\displaystyle \binom{m}{\tfrac{1}{2}(m+|k|)}&\text{otherwise}.
\end{array}
\right.
$$
Let us introduce the modified Bessel function of the first kind\footnote{The function $x \leadsto {\mathcal I}_{\nu}(x)$ can also be defined as the solution to the differential equation
$$x^2\frac{d^2y}{dx^2}+x\frac{dy}{dx}-(x^2+\nu^2)y=0.$$}
$${\mathcal I}_{\nu}(x)=\sum_{m\geq 0}\frac{(x/2)^{2m+\nu}}{m! \Gamma(\nu+m+1)},$$
for every $x, \nu \in \R$, and
where
$$\Gamma(x)=\int_{0}^{+\infty}t^{x-1}e^{-t}dt$$
denotes the Gamma function.
Straightforward computations
show that
\begin{equation} \label{def densite}
f_{\Delta}(\vartheta, k)=\exp(-\vartheta \Delta)\, {\mathcal I}_{|k|}\big(\vartheta \Delta\big).
\end{equation}
See for instance \cite{Sato}, p. 21 Example 4.7.
We gather some technically useful properties of the function ${\mathcal I}_{\nu}(x)$ that we will repeatedly use in the sequel.
\begin{lemma} \label{lemme Bessel}
\begin{enumerate}
\item For every $x \in \R\setminus \{0\}$ and $\nu \in \R$, we have
\begin{equation} \label{equadiff}
\partial_x \,{\mathcal I}_{\nu}(x) = {\mathcal I}_{\nu+1}(x)+\frac{\nu}{x}{\mathcal I}_{\nu}(x).
\end{equation}
\item For every $\mu > \nu>-\frac{1}{2}$ and $x>0$, we have
\begin{equation} \label{decroissance}
{\mathcal I}_{\mu}(x) < {\mathcal I}_{\nu}(x).
\end{equation}
\end{enumerate}
\end{lemma}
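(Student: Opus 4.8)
\textbf{Part (1)} is a purely formal computation. After factoring out $(x/2)^\nu$, both the series defining $\mathcal{I}_\nu$ and its termwise $x$-derivative are power series in $x^2$ with infinite radius of convergence, hence converge locally uniformly and may be differentiated term by term. I would apply this to $x^{-\nu}\mathcal{I}_\nu(x)=\sum_{m\ge0}x^{2m}/\big(2^{2m+\nu}m!\,\Gamma(\nu+m+1)\big)$: differentiating and re-indexing $m\mapsto m+1$ gives $\tfrac{d}{dx}\big(x^{-\nu}\mathcal{I}_\nu(x)\big)=x^{-\nu}\mathcal{I}_{\nu+1}(x)$, and expanding the left-hand side by the product rule and multiplying by $x^\nu$ yields exactly \eqref{equadiff}. (Equivalently, differentiating $\mathcal{I}_\nu$ directly and splitting $2m+\nu=m+(m+\nu)$ lets one re-index the two resulting sums into $\tfrac12\mathcal{I}_{\nu-1}(x)+\tfrac12\mathcal{I}_{\nu+1}(x)$, while the same manipulation identifies $\tfrac{2\nu}{x}\mathcal{I}_\nu(x)=\mathcal{I}_{\nu-1}(x)-\mathcal{I}_{\nu+1}(x)$; adding the two gives \eqref{equadiff}, and a byproduct is the companion identity $\partial_x\mathcal{I}_\nu(x)=\mathcal{I}_{\nu-1}(x)-\tfrac{\nu}{x}\mathcal{I}_\nu(x)$.)

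For \textbf{part (2)} I would prove that $\nu\mapsto\mathcal{I}_\nu(x)$ is strictly decreasing, in stages. First, every term of the defining series is strictly positive for $x>0$ and $\nu>-1$, hence $\mathcal{I}_\nu(x)>0$ there; subtracting the two forms of \eqref{equadiff} recalled above yields the three-term recurrence $\mathcal{I}_{\nu-1}(x)-\mathcal{I}_{\nu+1}(x)=\tfrac{2\nu}{x}\mathcal{I}_\nu(x)$, already $>0$ for $\nu>0$, which gives $\mathcal{I}_{\nu+1}(x)<\mathcal{I}_{\nu-1}(x)$, i.e. the claim for orders differing by $2$. To sharpen this to a step of $1$, I would show that $\nu\mapsto q_\nu(x):=\mathcal{I}_{\nu+1}(x)/\mathcal{I}_\nu(x)$ is strictly decreasing — equivalently the Turán-type inequality $\mathcal{I}_\nu(x)^2>\mathcal{I}_{\nu-1}(x)\mathcal{I}_{\nu+1}(x)$ — and anchor it at $q_{-1/2}(x)=\mathcal{I}_{1/2}(x)/\mathcal{I}_{-1/2}(x)=\tanh x<1$, read off from the closed forms $\mathcal{I}_{1/2}(x)=\sqrt{2/(\pi x)}\,\sinh x$ and $\mathcal{I}_{-1/2}(x)=\sqrt{2/(\pi x)}\,\cosh x$. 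Then $q_\nu(x)<1$ for every $\nu\ge-1/2$, i.e. $\mathcal{I}_{\nu+1}(x)<\mathcal{I}_\nu(x)$; iterating over unit steps covers $\mu-\nu\in\N$, and the remaining fractional gap is absorbed by the full monotonicity of $\nu\mapsto\mathcal{I}_\nu(x)$, which I would deduce from the Poisson representation $\mathcal{I}_\nu(x)=\tfrac{(x/2)^\nu}{\sqrt\pi\,\Gamma(\nu+1/2)}\int_{-1}^{1}(1-t^2)^{\nu-1/2}e^{xt}\,dt$ (valid for $\nu>-1/2$): the weight $(1-t^2)^{\nu-1/2}$ concentrates near $t=0$ as $\nu$ grows, so the integral, once normalised, decreases; the $\nu$-dependent prefactor has to be tracked jointly with the integral, which is the delicate bookkeeping here.

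The Turán-type inequality is the one genuinely non-routine ingredient, and I expect it to be the main obstacle; I would obtain it from the product formula $\mathcal{I}_\alpha(x)\mathcal{I}_\beta(x)=\tfrac{2}{\pi}\int_0^{\pi/2}\mathcal{I}_{\alpha+\beta}(2x\cos\theta)\cos\big((\alpha-\beta)\theta\big)\,d\theta$ (valid for $\alpha+\beta>-1$, itself a consequence of the Cauchy product of the two power series, or of the Poisson representation above). Taking $(\alpha,\beta)=(\nu,\nu)$ and $(\alpha,\beta)=(\nu-1,\nu+1)$ and subtracting gives $\mathcal{I}_\nu(x)^2-\mathcal{I}_{\nu-1}(x)\mathcal{I}_{\nu+1}(x)=\tfrac{4}{\pi}\int_0^{\pi/2}\mathcal{I}_{2\nu}(2x\cos\theta)\sin^2\theta\,d\theta$, which is $>0$ since $\mathcal{I}_{2\nu}>0$ on $(0,\infty)$ for $2\nu>-1$. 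Finally I note that only integer orders are used in the sequel, and for those the argument is fully self-contained and bypasses the fractional-step subtlety: the base case $\mathcal{I}_0(x)>\mathcal{I}_1(x)$ is immediate from $\mathcal{I}_0(x)-\mathcal{I}_1(x)=\tfrac1\pi\int_0^{\pi}e^{x\cos\theta}(1-\cos\theta)\,d\theta>0$, and combined with $q_n$ decreasing it yields $\mathcal{I}_{n+1}(x)<\mathcal{I}_n(x)$ for every integer $n\ge0$.
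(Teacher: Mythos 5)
Your proof of Part~(1) is essentially the paper's own argument: the paper remarks that \eqref{equadiff} ``readily follows from the fact that $x\leadsto\mathcal{I}_\nu(x)$ is analytical with an infinite radius of convergence'' and cites Watson; your termwise differentiation of $x^{-\nu}\mathcal{I}_\nu(x)$ is exactly that computation, carried out.

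For Part~(2) the paper simply cites Nasell and gives no argument, so your proposal is a genuinely different route — an attempt at a self-contained proof. The integer-order chain you supply at the end (Neumann's product formula $\Rightarrow$ Tur\'an inequality $\mathcal{I}_\nu^2>\mathcal{I}_{\nu-1}\mathcal{I}_{\nu+1}$ $\Rightarrow$ $q_n$ decreasing in $n$, anchored by $\mathcal{I}_0(x)>\mathcal{I}_1(x)$ from the integral representation) is correct and, as you rightly observe, is all the paper actually uses, since \eqref{decroissance} is invoked only with $\nu=|k|,|k|+1,|k|+2$, $k\in\Z$. The trade-off is that you have replaced one external citation (Nasell) by another (Neumann's product formula $\mathcal{I}_\alpha\mathcal{I}_\beta=\tfrac2\pi\int_0^{\pi/2}\mathcal{I}_{\alpha+\beta}(2x\cos\theta)\cos((\alpha-\beta)\theta)\,d\theta$, which is Watson, not a one-liner), but in return the logic from that formula to the needed inequality is transparent. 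Incidentally, for integers you do not even need the separate base-case computation: the Tur\'an inequality at $\nu=0$ together with $\mathcal{I}_{-1}=\mathcal{I}_1$ already gives $\mathcal{I}_0^2>\mathcal{I}_1^2$, hence $\mathcal{I}_0>\mathcal{I}_1$.

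There is, however, a real gap in the middle of your argument for general real $\mu>\nu>-\tfrac12$. From ``$q_{-1/2}(x)<1$'' and ``the Tur\'an inequality $\Rightarrow q_{\nu-1}>q_\nu$'' you conclude ``$q_\nu(x)<1$ for every $\nu\ge-\tfrac12$''. But the Tur\'an inequality is a \emph{unit-step} statement: it only lets you chain $q_{-1/2}>q_{1/2}>q_{3/2}>\cdots$, and separately $q_0>q_1>\cdots$ once $q_0<1$ is known; it says nothing about $q_\nu$ for $\nu$ not congruent to the anchor mod~$1$, nor does it give monotonicity of $\nu\mapsto q_\nu(x)$ as a function of a \emph{continuous} $\nu$. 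That continuous monotonicity is exactly the content of the ``remaining fractional gap'', and your proposed use of the Poisson representation to fill it is hand-waved in a way that does not obviously close: the $\nu$-derivative of $\log\mathcal{I}_\nu$ contains the competing contributions $\log(x/2)-\psi(\nu+\tfrac12)$ (which can be large and positive for large $x$) and the negative integral term, and showing the latter always dominates is precisely where the difficulty lies. So the full statement of the lemma is not established by your proposal, and this is the one place where the paper's citation of Nasell is doing real work. Since you flag that only integers are needed and supply a complete integer argument, the net effect is a self-contained proof of everything the paper uses, but not of the lemma as literally stated.
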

\begin{proof} Property 1 can be found in the textbook of Watson \cite{Watson} and readily follows from the fact that $x \leadsto {\mathcal I}_{\nu}(x)$ is analytical with an infinite radius of convergence. Property 2 is less obvious and follows from Nasell \cite{Nasell}.
\end{proof}
\subsubsection{The Fisher information of  ${\mathcal E}^{T,\Delta}$} \label{calcul info Fisher}
For $i = 1,\ldots, \lfloor T\Delta^{-1}\rfloor$, let ${\mathcal E}_{i}^{T,\Delta}$ denote the experiment generated by the observation of the incerement $X_{i\Delta}-X_{(i-1)\Delta} $. Since $(X_t)$ has independent stationary increments, we have, for $k \in \Z$
$$\PP_{\vartheta}^{T,\Delta}\big(X_{i\Delta}-X_{(i-1)\Delta}=k\big) = \PP_{\vartheta}^{T,\Delta}\big(X_{\Delta}=k\big) =f_\Delta(\vartheta,k).$$
Using that $X_0=0$, it follows that
\begin{equation} \label{factorization}
{\mathcal E}^{T,\Delta} = \bigotimes_{i = 1}^{\lfloor T\Delta^{-1}\rfloor}{\mathcal E}_i^{T,\Delta}
\end{equation}
as a product of independent observations given by the increments $X_{i\Delta}-X_{(i-1)\Delta}$,  each experiment  ${\mathcal E}_{i}^{T,\Delta}$ being dominated by the counting measure on $\Z$ with density $f_{\Delta}(\vartheta,k)$ given by \eqref{def densite} that does not depend on $i$.  Moreover, ${\mathcal E}_{i}^{T,\Delta}$ has (possibly infinite) Fisher information given by
$$\mathfrak{I}_{\Delta}(\vartheta) = \E_\vartheta^{T,\Delta_T}\big[\big(\partial_\vartheta \log f_{\Delta}(\vartheta, X_{\Delta})\big)^2\big] = \sum_{k \in \Z}\frac{\big(\partial_\vartheta f_{\Delta}(\vartheta,k)\big)^2}{f_{\Delta}(\vartheta,k)}\leq +\infty$$
 which does not depend on $i$. We study the regularity of ${\mathcal E}^{T,\Delta}$ in the classical sense of Ibragimov and Hasminskii (see \cite{IH} p. 65).
\begin{definition}
The experiment ${\mathcal E}_i^{T,\Delta}$ is regular (in the sense of Ibragimov and Hasminskii) if
\begin{enumerate}
\item[$\mathrm{i)}$] The mapping $\vartheta \leadsto f_{\Delta}(\vartheta,k)$ is continuous on $\Theta$ for every $k \in \Z$.
\item[$\mathrm{ii)}$] The Fisher information is finite: $ \mathfrak{I}_{\Delta}(\vartheta)<+\infty$ for every $\vartheta \in \Theta$.
\item[$\mathrm{iii)}$] The mapping $\vartheta \leadsto \partial_\vartheta \big(f_{\Delta}(\vartheta,\cdot)^{1/2}\big)$ is continuous in $\ell^2(\Z)$.
\end{enumerate}
\end{definition}
\begin{lemma} \label{regularity}
The experiments ${\mathcal E}_i^{T,\Delta}$ are regular.
\end{lemma}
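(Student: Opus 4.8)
The plan is to check the three defining conditions i)--iii) of regularity one by one, directly from the closed form $f_\Delta(\vartheta,k)=e^{-\vartheta\Delta}{\mathcal I}_{|k|}(\vartheta\Delta)$ of \eqref{def densite} together with the elementary properties of the Bessel functions recorded in Lemma \ref{lemme Bessel}. Since the density $f_\Delta(\vartheta,\cdot)$ does not depend on $i$, all the experiments ${\mathcal E}_i^{T,\Delta}$ coincide, so it suffices to treat a single one. Two preliminary remarks will be used throughout: the power series defining ${\mathcal I}_\nu$ has nonnegative coefficients and infinite radius of convergence, hence $x\leadsto{\mathcal I}_{|k|}(x)$ is real-analytic, nondecreasing, and strictly positive on $(0,\infty)$; and $X_\Delta$ is a compound Poisson variable with $\pm1$ jumps, hence has finite moments of every order, in particular $\sum_{k\in\Z}k^2 f_\Delta(\vartheta,k)=\E_\vartheta^{T,\Delta}[X_\Delta^2]=\vartheta\Delta$.

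Condition i) is then immediate: for each $k\in\Z$, $f_\Delta(\vartheta,k)=e^{-\vartheta\Delta}{\mathcal I}_{|k|}(\vartheta\Delta)>0$ is a real-analytic, in particular continuous, function of $\vartheta\in\Theta$. For ii), I would differentiate $\log f_\Delta(\vartheta,k)=-\vartheta\Delta+\log{\mathcal I}_{|k|}(\vartheta\Delta)$ and invoke \eqref{equadiff}, which gives
$$\partial_\vartheta\log f_\Delta(\vartheta,k)=\Delta\big(h_\Delta(\vartheta,k)-1\big)+\frac{|k|}{\vartheta},\qquad h_\Delta(\vartheta,k)=\frac{{\mathcal I}_{|k|+1}(\vartheta\Delta)}{{\mathcal I}_{|k|}(\vartheta\Delta)}.$$
The monotonicity property \eqref{decroissance}, applied with $\mu=|k|+1>\nu=|k|>-\tfrac12$, yields $0<h_\Delta(\vartheta,k)<1$, so that $\big(\partial_\vartheta\log f_\Delta(\vartheta,k)\big)^2\le 2\Delta^2+2k^2/\vartheta^2$. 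Summing against $f_\Delta(\vartheta,\cdot)$ and using the second-moment identity above, $\mathfrak{I}_\Delta(\vartheta)\le 2\Delta^2+2\Delta/\vartheta<\infty$, which is ii).

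For iii), write $\partial_\vartheta\big(f_\Delta(\vartheta,k)^{1/2}\big)=\tfrac12 f_\Delta(\vartheta,k)^{1/2}\,\partial_\vartheta\log f_\Delta(\vartheta,k)$; each coordinate is continuous in $\vartheta$ (all the ingredients are analytic and $f_\Delta>0$) and $\big\|\partial_\vartheta(f_\Delta(\vartheta,\cdot)^{1/2})\big\|_{\ell^2(\Z)}^2=\tfrac14\mathfrak{I}_\Delta(\vartheta)<\infty$. To promote this to continuity of the map $\Theta\to\ell^2(\Z)$ at a fixed $\vartheta_0\in\Theta$, I would fix $\delta\in(0,\vartheta_0)$ and dominate uniformly on $[\vartheta_0-\delta,\vartheta_0+\delta]$: since $e^{-\vartheta\Delta}\le e^{-(\vartheta_0-\delta)\Delta}$ and ${\mathcal I}_{|k|}$ is nondecreasing, one has $f_\Delta(\vartheta,k)\le e^{2\delta\Delta}f_\Delta(\vartheta_0+\delta,k)$ there, while $\big(\partial_\vartheta\log f_\Delta(\vartheta,k)\big)^2\le 2\Delta^2+2k^2/(\vartheta_0-\delta)^2$. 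Hence $\big(\partial_\vartheta f_\Delta(\vartheta,k)^{1/2}\big)^2\le H_k$ for a sequence $(H_k)_{k\in\Z}$ that is summable (again by the moment identity, now at $\vartheta_0+\delta$) and independent of $\vartheta$, and dominated convergence applied to $\sum_{k\in\Z}\big(\partial_\vartheta f_\Delta(\vartheta_n,k)^{1/2}-\partial_\vartheta f_\Delta(\vartheta_0,k)^{1/2}\big)^2$ along an arbitrary sequence $\vartheta_n\to\vartheta_0$ in $\Theta$ finishes the proof. The only genuinely delicate step is this last one: it requires dominating $f_\Delta(\vartheta,\cdot)$ and $(\partial_\vartheta\log f_\Delta(\vartheta,\cdot))^2$ locally uniformly in $\vartheta$, which is precisely where the monotonicity of ${\mathcal I}_\nu$ and the finiteness of the second moment of $X_\Delta$ are used; conditions i) and ii) are essentially read off from \eqref{def densite} and Lemma \ref{lemme Bessel}.
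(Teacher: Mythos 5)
Your proof of conditions i) and ii) follows the same route as the paper: write $f_\Delta(\vartheta,k)=e^{-\vartheta\Delta}{\mathcal I}_{|k|}(\vartheta\Delta)$, use \eqref{equadiff} to express $\partial_\vartheta\log f_\Delta(\vartheta,k)=\Delta(h_\Delta(\vartheta,k)-1)+|k|/\vartheta$, bound $0\le h_\Delta\le 1$ via \eqref{decroissance}, and invoke finiteness of $\E_\vartheta^{T,\Delta}[X_\Delta^2]=\vartheta\Delta$; your explicit bound $\mathfrak{I}_\Delta(\vartheta)\le 2\Delta^2+2\Delta/\vartheta$ is a harmless refinement. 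Where you genuinely diverge is in iii). The paper proves $\ell^2$-continuity of $\vartheta\mapsto\partial_\vartheta(f_\Delta(\vartheta,\cdot)^{1/2})$ by applying a mean-value expansion coordinatewise, which forces it to compute and control the \emph{second} logarithmic derivative $\partial_\vartheta^2\log f_\Delta$ (bringing in the ratio ${\mathcal I}_{|k|+2}/{\mathcal I}_{|k|}$) and to establish locally uniform moment bounds on the resulting expression. You instead argue by dominated convergence: pointwise continuity of $k\mapsto\partial_\vartheta(f_\Delta(\vartheta,k)^{1/2})$ is immediate from analyticity and positivity of $f_\Delta$, and you dominate $\tfrac14 f_\Delta(\vartheta,k)\big(\partial_\vartheta\log f_\Delta(\vartheta,k)\big)^2$ by a $\vartheta$-free summable sequence $H_k$ on a compact neighbourhood of $\vartheta_0$, using monotonicity of ${\mathcal I}_\nu$ (from the nonnegative power-series coefficients) to bound $f_\Delta(\vartheta,k)\le e^{2\delta\Delta}f_\Delta(\vartheta_0+\delta,k)$. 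This is cleaner — you never have to touch the second derivative — at the cost of not extracting the quantitative $O(\varepsilon^2)$ modulus of continuity that the paper's mean-value argument yields, a rate the lemma does not actually require. Both arguments are correct; yours is shorter and stays closer to the definition.
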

\begin{proof}
For every $k\in \Z$,
$f_{\Delta}(\vartheta, k)  =\exp(-\vartheta \Delta) {\mathcal I}_{|k|}(\vartheta \Delta)$,
therefore i) is readily satisfied since $\vartheta \in \Theta \subset (0,\infty)$ and $\Delta >0$. We also have $f_{\Delta}(\vartheta, k)>0$ for every $k \in \Z$, then $ \mathfrak{I}_{\Delta}(\vartheta)$ is well defined, but possibly infinite. In order to prove ii), we  write
\begin{align}
\partial_\vartheta \log  f_{\Delta}(\vartheta, X_{\Delta})&  = \partial_\vartheta \log \big(e^{-\vartheta \Delta} {\mathcal I}_{|X_{\Delta}|}(\vartheta \Delta)\big) \nonumber -\Delta +\frac{\partial_\vartheta {\mathcal I}_{|X_{\Delta}|}(\vartheta \Delta)}{{\mathcal I}_{|X_{\Delta}|}(\vartheta \Delta)}\\
& = \Delta\big(h_{\Delta}(\vartheta,X_{\Delta})+(\vartheta\Delta)^{-1}|X_{\Delta}|-1\big), \label{score centre}
\end{align}
where we have set, for every $k \in \Z$,
$$
h_{\Delta}(\vartheta, k) = \frac{{\mathcal I}_{|k|+1}(\vartheta \Delta)}{{\mathcal I}_{|k|}(\vartheta \Delta)}
$$
and used Property \eqref{equadiff}. It follows that
\begin{equation} \label{Fisher individ}
\mathfrak{I}_{\Delta}(\vartheta)  = \Delta^2\, \E_\vartheta^{T,\Delta}\big[\big(h_{\Delta}(\vartheta, X_{\Delta})+(\vartheta\Delta)^{-1}|X_{\Delta}|-1\big)^2\big].
\end{equation}
Moreover the function $|X_{\Delta}| \leadsto {\mathcal I}_{|X_{\Delta}|}(\vartheta \Delta)$ is decreasing (see \eqref{decroissance}), thus
\begin{equation} \label{good bound}
0 \leq h_{\Delta}(\vartheta, X_{\Delta}) \leq 1
\end{equation}
and since $X_{\Delta}$ has all moments under $\PP_\vartheta^{T,\Delta}$, we obtain ii). We proceed similarly for iii). First, for any $\vartheta \in \Theta$ and $\varepsilon$  such that $\vartheta+\varepsilon \in \Theta$, we have
$$\partial_\vartheta \big(f_{\Delta}(\vartheta+\varepsilon,k)^{1/2}\big)-\partial_\vartheta  \big(f_{\Delta}(\vartheta,k)^{1/2}\big) = \varepsilon\, \partial_\vartheta^2 \big(f_{\Delta}(\vartheta_\varepsilon,k)^{1/2}\big)$$
for some $\vartheta_\varepsilon \in [\vartheta, \vartheta + \varepsilon]$. Second, we write
$$\partial_\vartheta \big(f_{\Delta}(\vartheta_\varepsilon,k)^{1/2}\big) = f_{\Delta}(\vartheta_\varepsilon,k)^{1/2} \tfrac{1}{2}\partial_\vartheta \log f_{\Delta}(\vartheta_\varepsilon,k),$$
and, differentiating a second time, we obtain that $\partial_\vartheta^2 \big(f_{\Delta}(\vartheta_\varepsilon,k)^{1/2}\big)$ equals
$$f_{\Delta}(\vartheta_\varepsilon,k)^{1/2}\Big(\big(\tfrac{1}{2}\partial_\vartheta \log f_{\Delta}(\vartheta_\varepsilon,k)\big)^2+\tfrac{1}{2}\partial_\vartheta^2 \log f_{\Delta}(\vartheta_\varepsilon,k)\Big).$$
Therefore, taking square and summing in $k$, we derive
\begin{align}
& \sum_{k \in \Z} \Big( \partial_\vartheta \big(f_{\Delta}(\vartheta+\varepsilon,k)^{1/2}\big)-\partial_\vartheta \big(f_{\Delta}(\vartheta,k)^{1/2}\big) \Big)^2 \nonumber \\
 = &\;\varepsilon^2\, \E_{\vartheta_\varepsilon}^{T,\Delta}\Big[\Big(\big(\tfrac{1}{2}\partial_\vartheta \log  f_{\Delta}(\vartheta_\varepsilon,X_{\Delta})\big)^2+\tfrac{1}{2}\partial_\vartheta^2 \log f_{\Delta}(\vartheta_\varepsilon,X_{\Delta})\Big)^2\Big].  \label{continuite}
\end{align}
From ii), we have
\begin{equation} \label{def log derivee}
\partial_\vartheta \log f_\Delta(\vartheta,X_{\Delta}) =
 \Delta\big(h_{\Delta}(\vartheta, X_{\Delta})+(\vartheta\Delta)^{-1}|X_{\Delta}|-1\big),
\end{equation}
and this last quantity has moments of all orders under $\PP_\vartheta^{T,\Delta}$, locally uniformly in $\vartheta$.
Likewise, using \eqref{def log derivee} and \eqref{equadiff}, it is easily seen that
\begin{align*}
\partial_\vartheta^2 \log f_{\Delta}(\vartheta,X_{\Delta})  =
   \;&\Delta^2\frac{{\mathcal I}_{|X_{\Delta}|+2}(\vartheta \Delta)}{{\mathcal I}_{|X_{\Delta}|}(\vartheta \Delta)}+\Delta\vartheta^{-1}h_{\Delta}(\vartheta,X_{\Delta}) \\
-&\;\Delta^2h_{\Delta}(\vartheta,X_{\Delta})^2 -\vartheta^{-2}|X_{\Delta}|.
\end{align*}
Thus $\partial_\vartheta^2 \log f_{\Delta}(\vartheta,X_{\Delta})$ has moments of all orders under $\PP_\vartheta^{T,\Delta_T}$ locally uniformly in $\vartheta$, thanks to \eqref{decroissance} and \eqref{good bound}. The same property carries over to the term within the expectation in \eqref{continuite} and we thus obtain iii) by letting $\varepsilon \rightarrow 0$.
\end{proof}
By the factorisation \eqref{factorization}, we infer that ${\mathcal E}^{T,\Delta}$ has Fisher information
$$\mathfrak{I}_{T,\Delta}(\vartheta) =\lfloor T\Delta_T^{-1}\rfloor \mathfrak{I}_{\Delta}(\vartheta) =\lfloor T\Delta^{-1}\rfloor  \sum_{k \in \Z}\frac{\big(\partial_\vartheta f_{\Delta}(\vartheta,k)\big)^2}{f_{\Delta}(\vartheta,k)}$$
which is finite thanks to ii) of Lemma \ref{regularity}.
\begin{lemma} \label{equivalents info}
For every $\vartheta \in \Theta$, we have
\begin{equation} \label{new Fisher}
\mathfrak{I}_{T,\Delta}(\vartheta)  =  \lfloor T\Delta^{-1}\rfloor
\Delta^2 \big(\E_{\vartheta}^{T,\Delta}\big[\big(h_{\Delta}(\vartheta, X_{\Delta})-(\vartheta\Delta)^{-1}|X_{\Delta}|-1\big)^2\big]\big),
\end{equation}
Moreover
in the microscopic and intermediate regimes, we have
\begin{equation} \label{equivalent info}
\frac{\mathfrak{I}_{T,\Delta_T}(\vartheta)}{I_{T,\Delta_T}(\vartheta)}\rightarrow 1\;\;\;\text{as}\;\;T \rightarrow \infty.
\end{equation}
\end{lemma}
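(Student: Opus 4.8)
The plan is to dispatch the two assertions separately. The closed form \eqref{new Fisher} is mere bookkeeping: the factorisation \eqref{factorization} gives $\mathfrak{I}_{T,\Delta}(\vartheta)=\lfloor T\Delta^{-1}\rfloor\,\mathfrak{I}_{\Delta}(\vartheta)$, and substituting the per-increment Fisher information \eqref{Fisher individ} yields the displayed expression (the Bessel recurrence \eqref{equadiff} being what identifies the two equivalent shapes of the integrand). So the real content is the asymptotic identification \eqref{equivalent info}, and the strategy there is to show that, after pulling out the ratio $\lfloor T\Delta_T^{-1}\rfloor\Delta_T^{2}/(T\Delta_T)$, which tends to $1$ because $\Delta_T/T\to0$ on every scale, the claim reduces in each regime to the behaviour of a single scalar quantity $g(\Delta):=\E_{\vartheta}^{T,\Delta}\big[\big(h_{\Delta}(\vartheta,X_{\Delta})+(\vartheta\Delta)^{-1}|X_{\Delta}|-1\big)^{2}\big]=\mathfrak{I}_{\Delta}(\vartheta)/\Delta^{2}$.

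For the intermediate regime $\Delta_T\to\Delta_\infty\in(0,\infty)$, I would invoke Theorem \ref{structure statistique inter} in the form $I_{T,\Delta_T}(\vartheta)\sim T\Delta_\infty\, g(\Delta_\infty)$, so that \eqref{equivalent info} becomes the statement $\Delta_T\,g(\Delta_T)\to\Delta_\infty\,g(\Delta_\infty)$, i.e. the continuity of $\Delta\leadsto g(\Delta)$ at $\Delta_\infty$. This I would establish by dominated convergence in the series $g(\Delta)=\sum_{k\in\Z}\big(h_{\Delta}(\vartheta,k)+(\vartheta\Delta)^{-1}|k|-1\big)^{2}f_{\Delta}(\vartheta,k)$, with $f_{\Delta}(\vartheta,k)=e^{-\vartheta\Delta}\mathcal{I}_{|k|}(\vartheta\Delta)$: each term is continuous in $\Delta$, and on a compact interval $[a,b]\subset(0,\infty)$ containing $\Delta_\infty$ one has $0\le h_{\Delta}(\vartheta,k)\le1$ from \eqref{good bound} and $f_{\Delta}(\vartheta,k)\le\mathcal{I}_{|k|}(b\vartheta)$ (monotonicity of $x\mapsto\mathcal{I}_{|k|}(x)$), so the series is dominated uniformly in $\Delta$ by $\big(2+(a\vartheta)^{-1}|k|\big)^{2}\mathcal{I}_{|k|}(b\vartheta)$, which is summable in $k$. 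This settles the intermediate case; it is, in effect, the continuity statement announced in the Remark following Theorem \ref{structure statistique inter}.

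For the microscopic regime $\Delta_T\to0$, I would invoke Theorem \ref{structure statistique micro} in the form $I_{T,\Delta_T}(\vartheta)\sim T/\vartheta$, so that by the same reduction \eqref{equivalent info} comes down to the single limit $\Delta\,g(\Delta)\to\vartheta^{-1}$ as $\Delta\downarrow0$. This is the one genuinely computational point and it is precisely the small-$\Delta$ expansion underpinning Theorem \ref{structure statistique micro}: from $\mathcal{I}_{\nu}(x)=\tfrac{(x/2)^{\nu}}{\nu!}\big(1+O(x^{2})\big)$ as $x\downarrow0$ one gets $f_{\Delta}(\vartheta,0)=1+O(\Delta)$, $f_{\Delta}(\vartheta,\pm1)=\tfrac12\vartheta\Delta\big(1+O(\Delta)\big)$, $f_{\Delta}(\vartheta,\pm j)=O\big((\vartheta\Delta)^{j}\big)$, and $0\le h_{\Delta}(\vartheta,k)=\tfrac{\vartheta\Delta}{2(|k|+1)}\big(1+O(\Delta)\big)$ uniformly in $k$; the only contribution of order $(\vartheta\Delta)^{-1}$ to $g(\Delta)$ is then the one coming from $|X_{\Delta}|=1$, where the integrand equals $\big((\vartheta\Delta)^{-1}+O(1)\big)^{2}$ carried by total mass $\vartheta\Delta\big(1+O(\Delta)\big)$, while the events $X_{\Delta}=0$ and $|X_{\Delta}|\ge2$ contribute only $O(1)$ owing to the rapid decay of $f_{\Delta}(\vartheta,\cdot)$. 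Hence $g(\Delta)=(\vartheta\Delta)^{-1}+O(1)$ and $\Delta\,g(\Delta)\to\vartheta^{-1}$.

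The hard part is this last estimate: one has to control the small-argument asymptotics of the modified Bessel functions $\mathcal{I}_{\nu}$ uniformly over the whole lattice $k\in\Z$, so that the mass away from $\{0,\pm1\}$ is verifiably negligible --- this is where the monotonicity bound of Lemma \ref{lemme Bessel} earns its keep. Note finally that the macroscopic scale is deliberately excluded from \eqref{equivalent info}: there the corresponding large-argument expansion of $\mathcal{I}_{\nu}$ is not at our disposal, which is exactly why the macroscopic LAN property in Theorem \ref{structure statistique macro} is obtained by a separate Le Cam-equivalence argument rather than through the Fisher information.
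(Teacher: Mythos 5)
Your proof is correct and tracks the paper's own approach: \eqref{new Fisher} comes from the product structure \eqref{factorization} and the per-increment score identity \eqref{Fisher individ}, the intermediate regime reduces to continuity of $\Delta\mapsto g(\Delta)$ (which you justify by dominated convergence using the bound $0\le h_\Delta\le 1$ and monotonicity of $\mathcal{I}_{|k|}$, just as the paper invokes implicitly), and the microscopic regime reduces to the small-argument expansion of $\mathcal{I}_\nu$ from Lemma \ref{exp Bessel 0}. The only cosmetic difference is that the paper isolates the dominant $\Delta/\vartheta$ term by passing through the expanded identity \eqref{technical Fisher}, whereas you estimate the contribution of each lattice value $|X_\Delta|\in\{0,1,\ge 2\}$ directly; these two bookkeepings are interchangeable.
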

\begin{proof}[of Lemma \ref{equivalent info}]
In the course of the proof of Lemma \ref{regularity}, we have seen by \eqref{Fisher individ} that
$$\mathfrak{I}_{\Delta}(\vartheta)  = \Delta^2\, \E_\vartheta^{T,\Delta}\big[\big(h_{\Delta}(\vartheta, X_{\Delta})+(\vartheta\Delta)^{-1}|X_{\Delta}|-1\big)^2\big].$$
It follows that
\begin{align*}
\mathfrak{I}_{T,\Delta}(\vartheta)  & =  \lfloor T\Delta^{-1}\rfloor \Delta^2\, \E_\vartheta^{T,\Delta}\big[\big(h_{\Delta}(\vartheta, X_{\Delta})+(\vartheta\Delta)^{-1}|X_{\Delta}|-1\big)^2\big] \\
 &=  \lfloor T\Delta^{-1}\rfloor
\Delta^2 \Big(\E_{\vartheta}^{T,\Delta}\big[\big(h_{\Delta}(\vartheta, X_{\Delta})+(\vartheta\Delta)^{-1}|X_{\Delta}|\big)^2\big]\\
&+1-2\E_{\vartheta}^{T,\Delta}\big[h_{\Delta}(\vartheta, X_{\Delta})+(\vartheta\Delta)^{-1}|X_{\Delta}|\big]\Big).
\end{align*}
Since ${\mathcal E}^{T,\Delta}_i$ is regular by Lemma \ref{regularity}, we have $\E_\vartheta^{T,\Delta}\big[\partial_\vartheta \log f_{\Delta}(\vartheta, X_{\Delta})\big]=0$. Combining this with the equality
$$\partial_\vartheta \log  f_{\Delta}(\vartheta, X_{\Delta}) = \Delta\big(h_{\Delta}(\vartheta,X_{\Delta})+(\vartheta\Delta)^{-1}|X_{\Delta}|-1\big)
$$
that we obtained in \eqref{score centre},
we derive
$$\E_{\vartheta}^{T,\Delta}\big[h_{\Delta}(\vartheta, X_{\Delta})+(\vartheta\Delta)^{-1}|X_{\Delta}|\big]=1,$$
and \eqref{new Fisher} follows. Expanding \eqref{new Fisher} further, we obtain the useful representation
\begin{align}
 \mathfrak{I}_{T,\Delta}(\vartheta)
& =  \lfloor T\Delta^{-1}\rfloor \Big(
\Delta^2\, \E_{\vartheta}^{T,\Delta}\big[h_{\Delta}(\vartheta, X_{\Delta})^2\big] \nonumber \\
& +\frac{2\Delta}{\vartheta}\E_\vartheta^{T,\Delta}\big[|X_{\Delta}|h_{\Delta}(\vartheta, X_{\Delta})\big]+\frac{\Delta}{\vartheta}-\Delta^2
\Big).  \label{technical Fisher}
\end{align}
Let us now assume that $\Delta = \Delta_T \rightarrow 0$. We will need the following asymptotic expansion of the function ${\mathcal I}_\nu(x)$ near $0$.
\begin{lemma} \label{exp Bessel 0}
We have, for $\nu \in \N$,
\begin{equation} \label{dev Bessel en 0}
{\mathcal I}_{\nu}(x) = \frac{1}{2^{\nu}\nu!} x^{\nu}\Big(1+x
r_\nu(x)\Big), \\
\end{equation}
where $x \leadsto r_\nu(x)$ is continuous and satisfies $\sup_{\nu \geq 0}r_{\nu}(x)\rightarrow 0$ when $x\rightarrow 0$.
\end{lemma}
\begin{proof}[Proof of Lemma \ref{exp Bessel 0}]
We have an expression of $\mathcal{I}_{\nu}(x)$ as a power series, thus its Taylor expansion in a neighborhood of $0$ is given by
$$
\mathcal{I}_{\nu}(x)=\Big(\frac{x}{2}\Big)^{\nu}\frac{1}{\nu!}\Big(1+xr_{\nu}(x)\Big),
$$
where
$$r_{\nu}(x)=\frac{x}{4}\sum_{m\geq0}\frac{\nu!}{(m+1)!(m+1+\nu)!} \Big(\frac{x}{2}\Big)^{2m}\leq x\sum_{m\geq0}\frac{1}{m!}\Big(\frac{x}{2}\Big)^{2m} =xe^{x^2/2}.$$
Then $x \leadsto r_\nu(x)$ is continuous and satisfies $\sup_{\nu \geq 0}r_{\nu}(x)\rightarrow 0$ when $x\rightarrow 0$.
\end{proof}
By Lemma \ref{exp Bessel 0},  a simple Taylor expansion shows that
$$h_{\Delta_T}(\vartheta,X_{\Delta})= \frac{{\mathcal I}_{|X_{\Delta}|+1}(\vartheta \Delta_T)}{{\mathcal I}_{|X_{\Delta_T}|}(\vartheta \Delta_T)} = \frac{\Delta_T}{2\vartheta}\frac{1}{|X_{\Delta}|+1}+\Delta_T r'_T(\vartheta,X_{\Delta})$$
where $|r'_T(\vartheta, X_\Delta)| \leq c(\vartheta)$, for some deterministic locally bounded $c(\vartheta)$.
Plugging this last expression in \eqref{technical Fisher}, we obtain
$$\mathfrak{I}_T(\vartheta) = \frac{T}{\vartheta}+T\Delta_Tr''_T(\vartheta),$$
with $r''_T$ having the same property as $r_T$, whence \eqref{equivalent info} in the microscopic case. In the intermediate case, since $\Delta \leadsto \E_\vartheta^{T,\Delta}\big[\big(h_{\Delta}(\vartheta, X_{\Delta})+(\vartheta\Delta)^{-1}|X_{\Delta}|-1\big)^2\big]$ is continuous on $(0,\infty)$, we readily obtain the result using that $\lfloor T\Delta_T^{-1}\rfloor
\Delta_T^2$ is equivalent to $T\Delta_T$ as $T \rightarrow \infty$. The proof of Lemma \ref{equivalents info} is complete.
\end{proof}
\subsection{Proof of Theorems \ref{structure statistique inter} and \ref{structure statistique micro} }
A technically convenient consequence of Lemma \ref{equivalents info} in the microscopic and macroscopic cases is that it suffices to prove Theorems \ref{structure statistique inter} and \ref{structure statistique micro} with $\mathfrak{I}_{T,\Delta_T}(\vartheta)$ instead of $I_{T,\Delta_T}(\vartheta)$, provided the convergence \eqref{defLAN ter} is valid locally uniformly. As ${\mathcal E}^{T,\Delta_T}$ is the product of ${\mathcal E}_{i}^{T,\Delta_T}$ generated by the $X_{i\Delta_T}-X_{(i-1)\Delta_T}$ that form independent and identically distributed random variables under
$\PP_\vartheta^{T,\Delta_T}$ with distribution depending on $T$, we are in the framework of Theorem 3.1' p. 128 in Ibragimov and Hasminskii \cite{IH} and the LAN property is a consequence of the following two conditions:
\begin{itemize}
\item[$\mathrm{i)}$] For every $\vartheta_0 \in \Theta$ and $h$ such that $\vartheta_0 + h \in \Theta$, we have
$$
\frac{\lfloor T\Delta_T^{-1}\rfloor }{\mathfrak{I}_{T,\Delta_T}(\vartheta_0)^2}\sup_{|\vartheta -\vartheta_0| \leq \tfrac{h}{\mathfrak{I}_{T,\Delta_T}(\vartheta_0)^{1/2}}}\sum_{k \in \Z}\big|\partial^2_\vartheta \big(f(\vartheta,k)^{1/2}\big)\big|^2 \rightarrow 0
$$
as $T \rightarrow \infty$.
\item[$\mathrm{ii)}$] For every $h >0$ and $\vartheta \in \Theta$, we have
$$
\frac{\lfloor T\Delta_T^{-1}\rfloor}{\mathfrak{I}_{T,\Delta_T}(\vartheta)}  \E_\vartheta^{T,\Delta_T}\big[\big(\partial_\vartheta \log f(\vartheta, X_{\Delta_T})\big)^{2}{\bf 1}_{\{|\partial_\vartheta \log f(\vartheta, X_{\Delta})|\geq h \mathfrak{I}_{T,\Delta_T}(\vartheta)\}}\big]\rightarrow 0
$$
as $T \rightarrow \infty$.
\end{itemize}
In the same way as for the proof of ii) in Lemma \ref{regularity}, we have
\begin{align*}
\partial_\vartheta^2 f_{\Delta_T}(\vartheta,k)  &= \tfrac{1}{2} f_{\Delta_T}(\vartheta, k)^{1/2} \Big(\tfrac{1}{2}\big(-\Delta_T+|k|\vartheta^{-1}+\Delta_Th_{\Delta_T}(\vartheta,k)\big)^2 \\
&-|k|\vartheta^{-2}+\Delta_T^2\big(\frac{{\mathcal I}_{|k|+2}(\vartheta \Delta_T)}{{\mathcal I}_{|k|}(\vartheta \Delta_T)}-h_{\Delta_T}
(\vartheta, k)^2\big)+\frac{\Delta}{\vartheta}h_{\Delta_T}(\vartheta,k)\Big) \\
&= \tfrac{1}{2}f_{\Delta_T}(\vartheta, k)^{1/2}\mathcal{H}_{\Delta_T}(\vartheta, k),\;\;\;\text{say.}
\end{align*}
Therefore, taking squares and summing in $k$, i) is proved if we show that
\begin{equation} \label{reduction cond 1 LAN}
\frac{\lfloor T\Delta_T^{-1}\rfloor}{\mathfrak{I}_{T,\Delta_T}(\vartheta_0)^2} \sup_{|\vartheta -\vartheta_0| \leq \tfrac{h}{\mathfrak{I}_{T,\Delta_T}(\vartheta_0)^{1/2}}} \E_{\vartheta}^{T,\Delta_T}\big[\mathcal{H}_{\Delta_T}(\vartheta, X_{\Delta_T})^2\big]\rightarrow 0
\end{equation}
as $T \rightarrow \infty$. Using \eqref{decroissance}, we have
$$0 \leq h_{\Delta_T}(\vartheta, k)\leq 1\;\;\;\text{and}\;\;
0 \leq \frac{{\mathcal I}_{|k|+2}(\vartheta \Delta_T)}{{\mathcal I}_{|k|}(\vartheta \Delta_T)} \leq 1,$$
hence $\mathcal{H}_{\Delta}(\vartheta, X_{\Delta_T})^2$ is less than
$$c(\vartheta)\big(\Delta_T^4+\Delta_T(1+\Delta_T^2)X_{\Delta_T}+
(1+\Delta_T^2)X_{\Delta_T}^2+(1+\Delta_T)\big|X_{\Delta_T}\big|^3+X_{\Delta_T}^4\big)$$
for a locally bounded $c(\vartheta)$, which in turn is less than
\begin{equation} \label{borne a integrer}
c'(\vartheta,\Delta_T)\big(\Delta_T+\Delta_T^4+X_{\Delta_T}^2+X_{\Delta_T}^4\big),
\end{equation}
for some $c'(\vartheta, \Delta_T)$ locally bounded on $\Theta \times [0,\infty)$.
Since $(X_t)$ is a compound Poisson process under $\PP_{\vartheta}^{T,\Delta_T}$ with intensity $\vartheta$ and jumps in $\{-1,+1\}$ with equal probability, the characteristic function of $X_{\Delta_T}$ is explicitly given by
$$\E_{\vartheta}^{T,\Delta_T}\big[e^{iu X_{\Delta_T}}\big] = \exp\big(-\vartheta \Delta_T(1-\cosh u)\big),\;\;u \in \R,$$
from which we obtain
\begin{equation} \label{moment 4}
\E_{\vartheta}^{T,\Delta_T}[X_{\Delta}^4]=\vartheta \Delta_T(1+3\vartheta \Delta_T).
\end{equation}
Integrating \eqref{borne a integrer}, we derive
$$\E_{\vartheta}^{T,\Delta_T}\big[\mathcal{H}_{\Delta_T}(\vartheta, X_{\Delta_T})^2\big] \leq c''(\vartheta,\Delta_T)\Delta_T,$$
where $c''$ has the same property as $c'$. Since $\mathfrak{I}_{T,\Delta_T}(\vartheta_0)$ is of order $T$ as $T \rightarrow \infty$ in both microscopic and intermediate scales, we obtain \eqref{reduction cond 1 LAN} and i) follows.

It remains to prove ii). From the explicit representation
\begin{equation} \label{explicit representation}
\partial_\vartheta \log f_{\Delta_T}(\vartheta,k) = \Delta_T\big(-1+h_{\Delta_T}(\vartheta, k)\big)+|k|\vartheta^{-1}
\end{equation}
and the fact that $0 \leq h_{\Delta_T}(\vartheta,k) \leq 1$, we have
$$\big| \partial_\vartheta\log f_{\Delta_T}(\vartheta,k) \big| \leq \Delta_T + |k|\vartheta^{-1},$$
from which we readily obtain
$$\E_{\vartheta}^{T,\Delta_T}\big[\big( \partial_\vartheta\log f_{\Delta_T}(\vartheta,X_{\Delta_T}) \big)^2\big] \leq c'''(\vartheta, \Delta_T),$$
where $c'''$ has the same property as $c'$. Since $\mathfrak{I}_{T,\Delta_T}(\vartheta) \rightarrow \infty$ as $T\rightarrow \infty$, we obtain ii) in both microscopic and intermediate scales. The proof of Theorems \ref{structure statistique inter} and \ref{structure statistique micro} is complete.
\subsection{Proof of Theorem \ref{structure statistique macro}} \label{preuve macro}
The strategy of the proof is quite different from that of Theorems \ref{structure statistique inter} and \ref{structure statistique micro}, for we were not able to obtain asymptotic expansions for $\mathcal{I}_{\nu}(x)$ in a viscinity of $x=\infty$ with appropriate bounds on the stochastic remainders.

Consider instead the experiment ${\mathcal Q}^{T,\Delta_T} = \{\QQ_\vartheta^{T,\Delta_T}, \vartheta \in \Theta\}$ generated by the observation of $\lfloor T\Delta_T^{-1}\rfloor$ independent centred Gaussian random variables with variance $\vartheta\Delta_T$, for $\Delta_T \rightarrow \infty$ satisfying the rate restriction
\begin{equation} \label{rate restriction}
T/\Delta_T^{1+\frac{1}{4}}=o((\log( T/\Delta_T))^{-\frac{1}{4}})
\end{equation}
We plan to prove that under the restriction \eqref{rate restriction}, the experiments ${\mathcal E}^{T,\Delta_T}$ and ${\mathcal Q}^{T,\Delta_T}$ are asymptotically equivalent as $T \rightarrow \infty$. Theorem \ref{structure statistique macro} then follows from the Le Cam theory, see for instance \cite{LCY}. To that end, we map each increment $X_{i\Delta_T}-X_{(i-1)\Delta_T}$ in ${\mathcal E}^{T,\Delta_T}$ with
$$Y_{i}^{\Delta_T} = X_{i\Delta_T}-X_{(i-1)\Delta_T}+U_i$$
where the $U_i$ are independent random variables uniformly distributed on $[-\tfrac{1}{2},\tfrac{1}{2}]$. Let us denote by $\widetilde {\mathcal E}^{T,\Delta_T} = \{\widetilde \PP_\vartheta^{T,\Delta_T}, \vartheta \in \Theta\}$ the experiment generated by the $Y_{i}^{\Delta_T}$. Since the increments $X_{i \Delta}-X_{(i-1)\Delta_T}$ take values in $\Z$, we have a one-to-one correspondence between $Y_{i}^{\Delta_T}$ and the  increment $X_{i\Delta_T}-X_{(i-1)\Delta_T}$ and therefore the experiments ${\mathcal E}^{T,\Delta_T}$ and $\widetilde {\mathcal E}^{T,\Delta_T}$ are equivalent. Moreover, $\widetilde {\mathcal E}^{T,\Delta_T}$ and ${\mathcal Q}^{T,\Delta_T}$ live on the same state space $\R^{\lfloor T\Delta_T^{-1}\rfloor}$ and have smooth densities with respect to the Lebesgue measure. The proof of Theorem \ref{structure statistique macro} is therefore implied by the following bound
$$\|\widetilde \PP_\vartheta^{T,\Delta_T}- \QQ^{T,\Delta_T}_\vartheta\|_{TV} \rightarrow 0\;\;\text{as}\;\; T \rightarrow \infty,$$
locally uniformly in $\vartheta$ and where $\|\cdot\|_{TV}$ denotes the variational norm. This bound is implied in turn by the bound
\begin{equation} \label{dist var}
\|{\mathcal L}(Y_1^{\Delta_T}) - {\mathcal N}(0,\vartheta\Delta_T)\|_{TV} = o\big((T/\Delta_T)^{-1}\big)
\end{equation}
locally uniformly in $\vartheta$, since each experiment is the $\lfloor (T/\Delta_T)^{-1} \rfloor$-fold product independent and identically distributed random variables\footnote{For instance, by using the bound
$$\|\PP^{\otimes n}-\QQ^{\otimes n}\|_{TV} \leq \sqrt{2}\big(1-\big(1-\tfrac{1}{2}\|\PP-\QQ\|_{TV}\big)^n\big)^{1/2}.$$}.
Let us further denote by $p_{\vartheta,\Delta_T}$ and $q_{\vartheta, \Delta_T}$ the densities of $Y_1^{\Delta_T}$ and the Gaussian law ${\mathcal N}(0,\vartheta \Delta_T)$ respectively. We have
\begin{align*}
\|{\mathcal L}(Y_1^{\Delta_T}) - {\mathcal N}(0,\vartheta\Delta_T)\|_{TV}  & = \int_{\R}|p_{\vartheta,\Delta_T}(x)-q_{\vartheta,\Delta_T}(x)|dx \\
 & \leq I + II + III,
\end{align*}
where, applying sucessively the triangle inequality and Cauchy-Schwarz, for any $\eta >0$,
\begin{align*}
I & = \sqrt{2\eta}\big(\int_{\R}\big(p_{\vartheta,\Delta_T}(x)-q_{\vartheta,\Delta_T}(x)\big)^2dx\big)^{1/2}, \\
II & =  \PP_{\vartheta}^{\Delta_T}\big(|X_{\Delta_T}+U_1|\geq \eta\big), \\
III & = \int_{|x|\geq \eta}q_{\vartheta, \Delta_T}(x)dx.
\end{align*}
Set $\eta = \eta_T = \kappa\sqrt{\Delta_T \log(T/\Delta_T)}$. We claim that for $\kappa^2 > 2\vartheta$, the terms $I$, $II$ and $III$ are $o\big((T/\Delta_T^{-1})\big)$ hence \eqref{dist var} and the result, for an appropriate choice of $\kappa$ so that the convergence can hold locally uniformly in $\vartheta$. Since $q_{\vartheta, \Delta_T}(x)$ is the density of the Gaussian law ${\mathcal N}(0,\vartheta \Delta_T)$, we readily obtain
$$III \leq 2 \exp\big(-\frac{\eta_T^2}{2\vartheta \Delta_T}\big) = (T/\Delta_{T})^{-\kappa^2/(2\vartheta)} = o\big((T/\Delta_T^{-1})\big)$$
using $\kappa^2 >2\vartheta$. For the term $II$, we observe that since $|U_1| \leq 1/2$, we have
$$\PP_{\vartheta}^{\Delta_T}\big(|X_{\Delta_T}+U_1|\geq \eta_T\big) \leq \PP_{\vartheta}^{\Delta_T}\big(\big|\sum_{i = 1}^{N_{\Delta_T}}\varepsilon_i\big| \geq \eta_T-\tfrac{1}{2}\big),$$
where the $\varepsilon_i \in\{-1,1\}$ are independent and symmetric. By Hoeffding inequality, this term is further bounded by
$$2\E_\vartheta^{T,\Delta_T}\big[\exp\big(-\tfrac{(\eta_T-1/2)^2}{2N_{\Delta_T}}\big)\big] \leq 2\Big(\exp\big(-\tfrac{(\eta_T-1/2)^2}{2\kappa' \Delta_T}\big)+\PP_\vartheta^{T,\Delta_T}\big(N_{\Delta_T} \geq \kappa' \Delta_T\big)\Big)$$
for every $\kappa'>0$. If $\kappa' <\kappa^2/2$, one readily checks that
$$\exp\big(-\tfrac{(\eta_T-1/2)^2}{2\kappa' \Delta_T}\big) = o\big((T/\Delta_T)^{-1}\big).$$
Moreover, if $\kappa' > \vartheta$, we have, by Chernov inequality,
$$\PP_\vartheta^{T,\Delta_T}\big(N_{\Delta_T}-\vartheta \Delta_T \geq (\kappa'-\vartheta)\Delta_T\big) \leq \exp\Big(-\Delta_T(\kappa'\log(\kappa'/\vartheta)-(\kappa'-\vartheta)\big)\Big)$$
and this term is also $o\big((T/\Delta_T)^{-1}\big)$. Thus $II$ and $III$ have the right order and it remains to bound the main term $I$. By Plancherel equality we obtain the following explicit expression:
\begin{align*}
& \int_{\R} \big(p_{\vartheta,\Delta_T}(x)- q_{\vartheta,\Delta_T}(x)\big)^2dx =  (2\pi)^{-1}\int_{\R} (\widehat{p}_{\vartheta,\Delta_T}(\xi)- \widehat{q}_{\vartheta,\Delta_T}(\xi))^2d\xi\\
 = &\; (2\pi)^{-1}\int_{\R}\Big(e^{-\vartheta\Delta_T(1-\cos\xi)}\frac{\sin\tfrac{\xi}{2}}{\tfrac{\xi}{2}}- e^{-\tfrac{1}{2}\vartheta \Delta_T\xi^2}\Big)^2d\xi \\
 = &\; (2\pi)^{-1}\int_{\R}\Big(e^{-\vartheta\Delta_T\big(1-\cos(\tfrac{\xi}{\sqrt{\Delta_T}})\big)}\frac{\sin(\tfrac{\xi}{2\sqrt{\Delta_T}})}{\tfrac{\xi}{2\sqrt{\Delta_T}}}- e^{-\tfrac{1}{2}\vartheta \xi^2}\Big)^2\frac{d\xi}{\sqrt{\Delta_T}} \\
 \leq &\; IV + V + VI,
\end{align*}
with
\begin{align*}
IV & =  (2\pi)^{-1}\int_{|\xi| \leq \rho \sqrt{\Delta_T}}\Big(e^{-\vartheta\Delta_T\big(1-\cos(\tfrac{\xi}{\sqrt{\Delta_T}})\big)}\frac{\sin(\tfrac{\xi}{2\sqrt{\Delta_T}})}{\tfrac{\xi}{2\sqrt{\Delta_T}}}- e^{-\tfrac{1}{2}\vartheta \xi^2}\Big)^2\frac{d\xi}{\sqrt{\Delta_T}}, \\
V & = 2(2\pi)^{-1} \int_{|\xi| \geq \rho \sqrt{\Delta_T}} e^{-2\vartheta\Delta_T\big(1-\cos(\tfrac{\xi}{\sqrt{\Delta_T}})\big)}\Big(\frac{\sin(\tfrac{\xi}{2\sqrt{\Delta_T}})}{\tfrac{\xi}{2\sqrt{\Delta_T}}}\Big)^2\frac{d\xi}{\sqrt{\Delta_T}}, \\
VI & = 2(2\pi)^{-1}\int_{|\xi| \geq \rho\sqrt{\Delta_T}}e^{-\vartheta \xi^2}\frac{d\xi}{\sqrt{\Delta_T}},
\end{align*}
for any $\rho \geq 0$. By a first order expansion, we have that $IV$ is less than
$$\int_{|\xi |\leq \rho\sqrt{\Delta_T}} e^{-\vartheta \xi^2}\Big(\Big(\frac{\xi^4\alpha(\tfrac{\xi}{\sqrt{\Delta_T}})}{\Delta_T} +\frac{\xi^6\alpha(\tfrac{\xi}{\sqrt{\Delta_T}})}{\Delta_T^2} \Big) e^{\frac{\xi^4\alpha\big(\tfrac{\xi}{\sqrt{\Delta_T}}\big)}{\Delta_T}} +\frac{\xi^2\alpha(\tfrac{\xi}{\sqrt{\Delta_T}})}{\Delta_T}\Big)^2\frac{d\xi}{\sqrt{\Delta_T}}
$$
for some bounded function $\xi \leadsto \alpha(\xi)$. Set $\alpha^\star = \sup_x|\alpha(x)|$. We thus obtain that $IV$ is less than a constant times
\begin{align*}
&  \int_{|\xi|\leq \rho\sqrt{\Delta_T} } \frac{\xi^8}{\Delta_T^2}(\alpha^\star)^2 e^{-(\vartheta-2\rho \alpha^\star) \xi^2}\frac{d\xi}{\sqrt{\Delta_T}}.
\end{align*}
If we pick $\rho$ such that $\vartheta>2\rho\overline{\alpha}$, the term $IV$ is of order $\Delta_T^{-5/2}$. For  the term
 \begin{align*}
 V &=\int_{|\xi|> \rho} e^{-2\vartheta\Delta_T(1-\cos\xi)} \Big(\frac{\sin(\frac{\xi}{2})}{\frac{\xi}{2}}\Big)^{2}d\xi,
 \end{align*}
noting that $(\sin x)^2=\big(1-\cos(2x)\big)/2$, we bound the $2\pi$-periodic, even and continuous function $\xi \leadsto e^{-2\vartheta\Delta_T(1-\cos\xi)} (1-\cos\xi)$ by its supremum $(4e\vartheta\Delta_T)^{-1}$. The integrability of $\xi^{-2}$ away from 0 enables to conclude that $V$ is of order $\Delta_T^{-1}$. Finally, by Gaussian approximation, we readily obtain that $VI$ is of order $\Delta^{-1/2}e^{-\rho^2\vartheta \Delta_T}.$

In conclusion, we have that $\int_{\R} \big(p_{\vartheta,\Delta_T}(x)- q_{\vartheta,\Delta_T}(x)\big)^2dx$ is dominated by the term $V$ and is thus of order
$\Delta_T^{-1}$. It follows that  $I$ is of order $\eta_T^{1/2}\Delta_T^{-1/2}$ and the choice $\eta_T = \kappa\sqrt{\Delta_T \log(T/\Delta_T)}$ implies $I = o\big((T/\Delta_T)^{-1}\big)$ thanks to the restriction condition $T/\Delta_T^{1+\frac{1}{4}}=o((\log( T/\Delta_T))^{-\frac{1}{4}})$. The proof of Theorem \ref{structure statistique macro} is complete.
\subsection{Proof of Theorem \ref{estimateurs}}
Set
$$\xi_{i,T} = \frac{\big(X_{i\Delta_T}-X_{(i-1)\Delta_T}\big)^2-\vartheta \Delta_T}{\big(\lfloor T\Delta_T^{-1}\rfloor \vartheta \Delta_T(1+2\
\vartheta \Delta_T)\big)^{1/2}}.$$
Under $\PP_\vartheta^{T,\Delta_T}$, the variables $\xi_{i,T}$ are independent, identically distributed, and we have
$$\E_\vartheta^{T,\Delta_T}\big[\xi_{i,T}\big]=0\;\;\text{and}\;\;\sum_{i = 1}^{\lfloor T\Delta_T^{-1}\rfloor}\mathrm{Var}\big[\xi_{i,T}\big]=1$$
by \eqref{moment 4}. Moreover, for every $\delta >0$,
$$\sum_{i = 1}^{\lfloor T\Delta_T^{-1}\rfloor} \E_{\vartheta}^{T,\Delta_T}\big[\big|\xi_{i,T}\big|^2{\bf 1}_{\{|\xi_{i,T}| \geq \delta\}}\big] \rightarrow 0\;\;\text{as}\;\;T \rightarrow \infty,$$
therefore, by the central limit theorem
$U_T = \sum_{i = 1}^{\lfloor T\Delta_T^{-1} \rfloor} \xi_{i,T} \rightarrow {\mathcal N}(0,1)$
in distribution under $\PP_\vartheta^{T,\Delta_T}$ as $T\rightarrow \infty$ in all three regimes (microscopic, intermediate and macroscopic). We thus obtain the following representation
$$\widehat \vartheta_{T,\Delta_T}^{QV} = T^{-1}\Delta_T \lfloor T\Delta_T^{-1}\rfloor\vartheta +
T^{-1}\big(\lfloor T\Delta_T^{-1}\rfloor\Delta_T\vartheta(1+2\vartheta \Delta_T)\big)^{1/2}U_T$$
and the result follows from $T^{-1}\Delta_T \lfloor T\Delta_T^{-1}\rfloor \sim 1$ and
$$T^{-2}\lfloor T\Delta_T^{-1}\rfloor\Delta_T\vartheta(1+2\vartheta \Delta_T) \sim I_{T,0}(\vartheta)^{-1}+ I_{T,\infty}(\vartheta)^{-1}$$ as $T \rightarrow \infty$ in all three regimes.

\subsection{Proof of Theorem \ref{cas intermediaire}}

By \eqref{equivalent info} of Lemma \ref{equivalents info}, it suffices to prove
\begin{equation} \label{true deficiency}
\liminf_{T\rightarrow \infty} \mathfrak{I}_{T,\Delta_T}(\vartheta)\big(I_{T,0}(\vartheta)^{-1}+I_{T,\infty}(\vartheta)^{-1}\big) >1.
\end{equation}
Up to taking a subsequence, we may assume that $\Delta_T \rightarrow \Delta \in (0,1/(4\vartheta)]$ as $T \rightarrow \infty$. Using \eqref{technical Fisher} and Theorem \ref{estimateurs}, for every $\vartheta \in \Theta$, we have
\begin{align*}
& \mathfrak{I}_{T,\Delta_T}(\vartheta)\big(I_{T,0}(\vartheta)^{-1}+I_{T,\infty}(\vartheta)^{-1}\big) \\
\sim \;& \Big(\vartheta \Delta \E_{\vartheta}^{T,\Delta}\big[h_{\Delta}(\vartheta, X_{\Delta})^2\big] +2 \E_{\vartheta}^{T,\Delta}\big[|X_{\Delta}| h_{\Delta}(\vartheta, X_{\Delta})\big]+ 1-\vartheta \Delta\Big)(2\vartheta \Delta+1) \\
=:  \;& {\mathcal M}(\vartheta \Delta)\;\;\text{as}\;\;T\rightarrow \infty,
\end{align*}
where ${\mathcal M}$ is a univariate function since
$\PP_\vartheta^{T,\Delta}$ has density $f_{\Delta}(\vartheta, k)= e^{-\vartheta \Delta}{\mathcal I}_{|k|}(\vartheta \Delta)$ with respect to the counting measure on $\Z$. Therefore, Theorem \ref{cas intermediaire} is equivalent to proving that
\begin{equation} \label{minoration M}
\mathcal{M}(x) > 1\;\;\text{for every}\;\;x \in \big(0,\tfrac{1}{4}\big].
\end{equation}
Using (\ref{equadiff}) of Lemma \ref{lemme Bessel} we have
\begin{align*}
& \partial_x \mathcal M(\vartheta\Delta) \\=
 & 1-4\vartheta\Delta+(1+4\vartheta\Delta)\E_{\vartheta}^{T,\Delta}\big[h_{\Delta}(\vartheta,X_{\Delta})^2\big] +  4\E_{\vartheta}^{T,\Delta}\big[|X_{\Delta}| h_{\Delta}(\vartheta,X_{\Delta})\big]  \\
&+2(1+2\vartheta\Delta)\E_{\vartheta}^{T,\Delta}\big[\big(|X_{\Delta}|+\vartheta\Delta h_{\Delta}(\vartheta,X_{\Delta})\big)\partial_\vartheta h_{x}(\vartheta,X_{\Delta})\big]
\end{align*}
where $$\partial_\vartheta h_{\Delta}(\vartheta,k)=\frac{{\mathcal I}_{|k|+2}(\vartheta\Delta)}{{\mathcal I}_{|k|}(\vartheta\Delta)} +\frac{1}{\vartheta \Delta}h_{\Delta}(\vartheta,k)-h_{\Delta}(\vartheta,k)^2$$ is positive (see Theorem 1 of Baricz \cite{Baricz}) and
$h(\vartheta,k)$ is in $[0,1]$ according to (\ref{decroissance}) of Lemma \ref{lemme Bessel}. We derive
$$\partial_x \mathcal{M}(x) \geq 1-4x > 0\;\;\text{for}\;\;x\in\big(0,\tfrac{1}{4}\big],$$
hence \eqref{minoration M}. Since ${\mathcal M}(x) \rightarrow 1$ as $x \rightarrow 0$, the conclusion follows.

\end{document}